\newtheorem{thm}{Theorem}
\newtheorem{cor}[thm]{Corollary}
\newtheorem{lem}[thm]{Lemma}
\newtheorem{prop}[thm]{Proposition} 
\theoremstyle{definition}
\newtheorem{exem}[thm]{Example}
\newtheorem*{theorem*}{Theorem}
\theoremstyle{remark}
\numberwithin{equation}{section}
\def\<{\langle}
\def\>{\rangle}
\begin{document}
\title[]{Beurling and Model subspaces invariant under a universal operator}
\author{Ben Hur Eidt and  S. Waleed Noor}%
\address{IMECC, Universidade Estadual de Campinas, Campinas-SP, Brazil.}
\email{$\mathrm{b264387@dac.unicamp.br}$ (Ben Hur Eidt)} 
\email{$\mathrm{waleed@unicamp.br}$ (Waleed Noor) Corresponding Author}

\begin{abstract}  In this article, we characterize the Beurling and Model subspaces of the Hardy-Hilbert space $H^2(\mathbb{D})$  invariant under the composition operator $C_{\phi_a}f=f\circ\phi_a$, where $\phi_a(z) = az + 1 - a$ for $a \in (0,1)$ is an affine self-map of the open unit disk $\mathbb{D}$. These operators have universal translates (in the sense of Rota) and have attracted attention recently due to their connection with the Invariant Subspace Problem (ISP) and the classical Cesàro operator.

\end{abstract}

{\subjclass[2010]{Primary; Secondary}}
\keywords{Composition operators, model spaces, Beurling type spaces, invariant subspace problem.}
\maketitle{}

\section{Introduction} 

The Hardy-Hilbert space of the open unit disk $\mathbb{D}$, denoted by $H^2$ is the Hilbert space of holomorphic functions $f : \mathbb{D} \to \mathbb{C}$ such that 
$$\|f\|^2 = \sup\limits_{0 < r < 1} \frac{1}{2\pi} \int\limits_{0}^{2\pi} |f(re^{i \theta} )|^2 d\theta < \infty.$$
 If $\phi:\mathbb{D} \to \mathbb{D}$ is a holomorphic self-map of $\mathbb{D}$, then $C_{\phi}f = f \circ \phi$ is the composition operator with symbol $\phi$. The Littlewood subordination Theorem ensures that $C_{\phi}$ is always a bounded linear operator on $H^2$. The study of composition operators centers around the interaction between the function-theoretic properties of the symbol $\phi$ and the operator-theoretic properties of $C_\phi$.

The \emph{Invariant Subspace Problem} (ISP) can be stated as follows: Does every bounded linear operator on complex separable Hilbert space have a non-trivial closed invariant subspace? In this article we are interested in the affine symbols $\phi_a(z) = az + 1 - a$ for $a \in (0,1)$. In \cite{noor} the authors show that $C_{\phi_a} - \lambda$ is \emph{universal} in the sense of Rota (see \cite{rota}) for some numbers $\lambda \in \mathbb{C}$ and how it can be used to reformulate the ISP:

\begin{center}
   \textit{The ISP is true if and only if every minimal invariant subspace of $C_{\phi_a}$ is one-dimensional.} 
\end{center}

All invariant subspaces are assumed to be closed, and \emph{minimality} here imples that it does not contain another proper invariant subspace. Results about minimal \emph{cyclic} invariant subspaces were obtained recently in \cite{Carmo-Eidt-Noor}. The composition operators $C_{\phi_a}$ also appeared recently in \cite{Cesaro 2} and \cite{Cesaro 1}  where the authors considered the holomorphic flow given by $\varphi_t(z) = e^{-t}z + 1 - e^{-t}$ for $t \geq 0$. The focus here is on the classical Césaro operator defined on $H^2$ by
$$(\mathcal{C}f)(z) = \sum\limits_{n = 0}^{\infty} \left( \frac{1}{n + 1}\sum\limits_{k = 0}^na_k \right) z^n$$
where $(a_n)_{n\in\mathbb{N}}$ are the Taylor coefficients of $f\in H^2$. A complete description of the invariant subspaces of the Cesàro operator remains an open problem in concrete operator theory. A one-to-one correspondence between the invariant subspaces of $\mathcal{C}$ and the \emph{common} invariant subspaces of the family $\Phi = (\varphi_t)_{t \geq 0}$ is established in \cite[Theorem $2.1$]{Cesaro 1}. 

\begin{thm} \label{GP1}
A closed subspace $M$ of $H^2$ is invariant under the Cesàro operator $\mathcal{C}$ if and only if its orthogonal complement $M^{\perp}$ is invariant under the semigroup $\Phi$.
\end{thm}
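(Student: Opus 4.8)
The plan is to pass to adjoints and to recognize $\mathcal{C}^*$ as a \emph{resolvent} of the infinitesimal generator of the composition semigroup. Since a closed subspace $M$ is invariant under $\mathcal{C}$ if and only if $N:=M^\perp$ is invariant under $\mathcal{C}^*$, and ``$M^\perp$ is invariant under $\Phi$'' means $C_{\varphi_t}(M^\perp)\subseteq M^\perp$ for every $t\ge 0$, the theorem is equivalent to the assertion that a closed subspace $N$ satisfies $\mathcal{C}^*(N)\subseteq N$ if and only if $C_{\varphi_t}(N)\subseteq N$ for all $t\ge 0$. The bridge between these is the integral identity $\mathcal{C}^*=\int_0^\infty e^{-t}C_{\varphi_t}\,dt$, read as a strong (Bochner) integral.

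First I would record that $(C_{\varphi_t})_{t\ge 0}$ is a strongly continuous semigroup on $H^2$ with generator $\Gamma f=(1-z)f'$, and that $\|C_{\varphi_t}\|$ grows like $e^{t/2}$, so that $\int_0^\infty e^{-t}\|C_{\varphi_t}\|\,dt<\infty$ and the resolvent $R(\lambda,\Gamma)=(\lambda-\Gamma)^{-1}=\int_0^\infty e^{-\lambda t}C_{\varphi_t}\,dt$ exists for $\Re\lambda>1/2$. Evaluating this strong integral on the monomials $z^k$ (a change of variable $u=e^{-t}$ reduces it to $\int_0^1(1-u(1-z))^k\,du$) and comparing with the explicit formula $\mathcal{C}^*z^k=\tfrac{1}{k+1}(1+z+\cdots+z^k)$ yields $\mathcal{C}^*=R(1,\Gamma)$; the identity then extends to all of $H^2$ by density and boundedness. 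The easy implication is now immediate: if $N$ is $\Phi$-invariant, then for $f\in N$ the integrand $e^{-t}C_{\varphi_t}f$ lies in the closed subspace $N$ for every $t$, so its Bochner integral $\mathcal{C}^*f$ lies in $N$, whence $N$ is $\mathcal{C}^*$-invariant.

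The hard part will be the converse: recovering invariance under the entire semigroup from invariance under the single resolvent $\mathcal{C}^*=R(1,\Gamma)$. I would handle it by resolvent-to-semigroup inversion. Being a closed subspace invariant under the bounded operator $\mathcal{C}^*$, $N$ is invariant under every power $(\mathcal{C}^*)^k$, hence under every convergent power series in $\mathcal{C}^*$. The Neumann-type expansion $R(\lambda,\Gamma)=\sum_{k\ge 0}(1-\lambda)^k(\mathcal{C}^*)^{k+1}$, valid in a neighborhood of $\lambda=1$, therefore gives $R(\lambda,\Gamma)N\subseteq N$ near $1$. A clopen-set argument on the connected half-plane $\{\Re\lambda>1/2\}\subseteq\rho(\Gamma)$ — the set of $\lambda$ with $R(\lambda,\Gamma)N\subseteq N$ is open by the expansion and closed by norm-continuity of the resolvent together with closedness of $N$ — then propagates invariance to all $\lambda$ with $\Re\lambda>1/2$, which contains both $1$ and every large positive real $n/t$.

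Finally, applying Hille's exponential formula $C_{\varphi_t}f=\lim_{n\to\infty}\bigl(\tfrac{n}{t}R(\tfrac{n}{t},\Gamma)\bigr)^n f$, valid for $C_0$-semigroups, and using once more that $N$ is closed (so it absorbs the strong limit of a sequence of its elements), I conclude $C_{\varphi_t}(N)\subseteq N$ for every $t\ge 0$. I expect the principal technical obstacle to be precisely this last direction, namely justifying the clopen propagation of invariance across the resolvent half-plane and the passage to the limit in the exponential formula; the growth estimate $\|C_{\varphi_t}\|\asymp e^{t/2}$ is what guarantees that $\lambda=1$ and the points $n/t$ all lie in a single connected component of $\rho(\Gamma)$, making the argument go through.
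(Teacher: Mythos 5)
Your proof is correct. Note that the paper does not prove Theorem \ref{GP1} at all --- it quotes it from \cite[Theorem 2.1]{Cesaro 1} --- and your argument (verifying $\mathcal{C}^*z^k=\tfrac{1}{k+1}(1+z+\cdots+z^k)=\int_0^\infty e^{-t}C_{\varphi_t}z^k\,dt$ so that $\mathcal{C}^*=R(1,\Gamma)$, then transferring invariance between the single resolvent and the whole semigroup via the Neumann expansion, the connectedness of the half-plane $\{\mathrm{Re}\,\lambda>1/2\}$, and Hille's exponential formula) is essentially the resolvent--semigroup duality argument used in that reference, so there is nothing to fix.
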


It is well-known that for any $f \in H^2$ the radial limit 
$$f^*(e^{i \theta}) := \lim\limits_{r \to 1^{-}} f(re^{i \theta})$$
exists almost everywhere with respect to the normalized Lebesgue measure on $\mathbb{T}$ and the correspondence $f \to f^*$ gives us an isometric isomorphism between $H^2$ and a subspace of $L^2(\mathbb{T})$. See \cite{hhspaces} for more details. A function $\Theta \in H^2$ such that $|\Theta^*(e^{i \theta})| = 1$ almost everywhere in $\mathbb{T}$ is called an \textit{inner function}. The celebrated Beurling's Theorem states that a subspace $M \subseteq H^2$ is shift-invariant (i.e $zM\subset M$) if and only if $M = \Theta H^2$ for some inner function $\Theta$. These subspaces are called the \emph{Beurling subspaces}. Their orthogonal complements in $H^2$, i.e, the subspaces $(\Theta H^2)^{\perp}$ are called \textit{model spaces}. Recently, the invariant Beurling and model subspaces for composition operators were investigated by Bose, Muthukumar and Sarkar (see \cite{Beurlingtype} and \cite{Modeltype}). 

The main goal of this article is to characterize the model and Beurling subspaces of $H^2$ that are invariant under $C_{\phi_a}$ for some $a \in (0,1)$. The paper is organized as follows. In Section 2 we present some preliminaries.
In Section 3 we characterize all the model spaces that are invariant under $C_{\phi_a}$. The main result of this section states that a model space $(\Theta H^2)^{\perp}$ is invariant under $C_{\phi_a}$ if and only if $\Theta(z) = z^n$ for some $n \in \mathbb{N}_{0}$ (see Theorem \ref{mp}). In Section $4$, we consider the Beurling subspaces $\Theta H^2$ that are invariant under $C_{\phi_a}$. Here we obtain the following dichotomy: $\Theta $ is either the atomic singular inner function or $\Theta$ has infinitely many zeros accumulating at $1$ (see Theorem \ref{dic}).  As an immediate consequence of these results, Theorem \ref{GP1} provides characterizations of the invariant Beurling and model subspaces  for the Cesàro operator (see Corollaries \ref{Cesaro Beurling} and \ref{Cesaro Model}). The latter result was obtained recently by Gallardo-Gutiérrez, Partington and Ross \cite[Theorem 7.7]{Cesaro 2}.

\section{Preliminaries}

\subsection{Spectra of Schür functions}Denote by $H^{\infty}$  the space of all bounded analytic functions on $\mathbb{D}$ and denoted by $\|f\|_{\infty}$ the sup-norm of $f\in H^\infty$. The closed unit ball of $H^{\infty}$ is denoted by $\mathcal{S}:=\mathcal{S}(\mathbb{D})=\{f \in H^{\infty} \,\ | \,\ \|f\|_{\infty} \leq 1 \}$ and is called the \textit{Schür Class}. If $f \in \mathcal{S}$ we say that a point $z \in \overline
{\mathbb{D}}$ is a \textit{regular} point for $f$ if $z \in \mathbb{D}$ and $f(z) \neq 0$ or if $z \in \mathbb{T}$ and $f$ admits an analytic continuation across a neighborhood $V$ of $z$ with $|f| = 1$ in $V \cap \mathbb{T}$. The set of all regular points of $f$ is denoted by $\rho(f)$ and the set $\overline{\mathbb{D}} - \rho(f)$ is called the \textit{spectra} of $f$ and denoted by $\sigma(f)$. If $\Theta$ is an inner function and $\Theta$ admits an analytic continuation at some neighborhood $V$ of $z \in \mathbb{T}$ then the continuity of $\Theta$ in $V \cap S^1$ and the fact that $|\Theta^*(e^{i \theta})| = 1$ almost everywhere implies that $|\Theta| = 1$ in $V \cap S^1$. Every inner function $\Theta \in H^2$ can be written as $\Theta = \lambda BS_{\mu}$ where $\lambda$ is a unimodular constant, $B$ is the Blaschke product formed by the zeros of $\Theta$ and $S_{\mu}$ is a singular inner function associated to a measure $\mu$. More specifically:
$$S_{\mu}(z) = \exp \left( - \int\limits_{S^1} \frac{\xi+ z}{\xi - z} d\mu(\xi) \right).$$
where $\mu$ is a finite positive Borel measure on $\mathbb{T}$ which is singular with respect to the Lebesgue measure. For any $\xi \in \mathbb{T}$ and $\epsilon > 0$ consider the open arc of length $2\epsilon$ with center $\xi$ given by $A(\xi, \epsilon) := \{ \xi e^{it} \,\ | -\epsilon < t < \epsilon \}$. The \textit{support} of the measure $\mu$ is denoted $\mathrm{supp}(\mu) := \{\xi \in \mathbb{T} \,\ | \,\ \mu(A(\xi,\epsilon)) > 0 \,\ \,\ \forall \epsilon > 0\}.$ We refer the reader to the texts \cite{fricain1} and \cite{hhspaces} for more details. We shall need the following characterization for the spectra of an inner function.

\begin{prop}\cite[Theorem 5.4]{fricain1} \label{spectrainner}
   Let $\Theta$ be a non-constant inner function and let $\Theta(z) = \lambda B S_{\mu}$ be the canonical factorization of $\Theta$ where $\lambda\in\mathbb{T}$, $B$ is the Blaschke factor and $S_{\mu}$ is the singular inner part associated with the measure $\mu$.  Then
   $$\sigma(\Theta) = \left\{z \in \overline{\mathbb{D}} \,\ | \,\ \liminf\limits_{w \to z, \,\  w \in \mathbb{D}} |\Theta(w)| = 0 \right\} = \overline{Z(\Theta)} \cup \mathrm{supp}(\mu).$$
   where $Z(\Theta)$ denotes the set of all zeros of $\Theta$ in $\mathbb{D}$.
\end{prop}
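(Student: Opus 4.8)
The plan is to show that both the spectrum $\sigma(\Theta)$ and the level set $Z_0 := \{z \in \overline{\mathbb{D}} : \liminf_{w\to z,\,w\in\mathbb{D}}|\Theta(w)| = 0\}$ coincide with $\overline{Z(\Theta)}\cup\mathrm{supp}(\mu)$. Since the unimodular constant is irrelevant and $|\Theta| = |B|\,|S_\mu|$, the essential ingredient is the identity $\log|S_\mu(w)| = -(P\mu)(w)$, where $(P\mu)(w) = \int_{\mathbb{T}}\frac{1-|w|^2}{|\xi-w|^2}\,d\mu(\xi)$ is the Poisson integral of $\mu$; this follows from $\mathrm{Re}\,\frac{\xi+w}{\xi-w} = \frac{1-|w|^2}{|\xi-w|^2}$ for $\xi\in\mathbb{T}$. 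For interior points everything is immediate: $\Theta$ is continuous on $\mathbb{D}$, so $\liminf_{w\to z}|\Theta(w)| = |\Theta(z)|$, which vanishes exactly when $z\in Z(\Theta)$, and by definition such $z$ are precisely the non-regular points of $\mathbb{D}$; since $\mathrm{supp}(\mu)\subseteq\mathbb{T}$ and $Z(\Theta) = \overline{Z(\Theta)}\cap\mathbb{D}$ (interior zeros being isolated), the three sets agree inside $\mathbb{D}$. It thus remains to treat boundary points $\zeta\in\mathbb{T}$.

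The first step is an analytic--continuation dictionary showing that every $\zeta\in\mathbb{T}\setminus(\overline{Z(\Theta)}\cup\mathrm{supp}(\mu))$ is both regular and outside $Z_0$. For such $\zeta$ there is a neighbourhood $V$ containing no zeros of $B$ and meeting no mass of $\mu$. With the zeros bounded away from $V$, the Blaschke condition $\sum_n(1-|a_n|)<\infty$ forces the product defining $B$ to converge uniformly on a smaller neighbourhood, so $B$ continues analytically across $\zeta$ and, each factor being unimodular on $\mathbb{T}$, satisfies $|B| = 1$ on $V\cap\mathbb{T}$. Likewise, since $\mathrm{supp}(\mu)$ avoids $V$, the integrand $\frac{\xi+w}{\xi-w}$ is analytic in $w$ near $\zeta$ uniformly in $\xi\in\mathrm{supp}(\mu)$, so $S_\mu$ continues analytically across $\zeta$, and $\mathrm{Re}\,\frac{\xi+\lambda}{\xi-\lambda}=0$ for $\lambda,\xi\in\mathbb{T}$ gives $|S_\mu|=1$ on $V\cap\mathbb{T}$. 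Hence $\zeta$ is a regular point and, by continuity, $\liminf_{w\to\zeta}|\Theta(w)| = 1 \ne 0$. This yields $\sigma(\Theta)\cap\mathbb{T}\subseteq\overline{Z(\Theta)}\cup\mathrm{supp}(\mu)$ and $Z_0\cap\mathbb{T}\subseteq\overline{Z(\Theta)}\cup\mathrm{supp}(\mu)$.

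For the reverse inclusion I must show that every $\zeta\in(\overline{Z(\Theta)}\cup\mathrm{supp}(\mu))\cap\mathbb{T}$ lies in both $\sigma(\Theta)$ and $Z_0$. If $\zeta\in\overline{Z(\Theta)}$, choose zeros $a_{n_k}\to\zeta$; then $\Theta(a_{n_k})=0$, so $\liminf_{w\to\zeta}|\Theta(w)|=0$, which both places $\zeta$ in $Z_0$ and rules out an analytic continuation with $|\Theta|=1$ on an arc, whence $\zeta\in\sigma(\Theta)$. The substantive case is $\zeta\in\mathrm{supp}(\mu)$, where I claim $\liminf_{w\to\zeta}|S_\mu(w)|=0$, equivalently $\limsup_{w\to\zeta}(P\mu)(w)=+\infty$. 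The key estimate is that for $w=(1-\delta)\eta$ with $\eta\in\mathbb{T}$ one has $\frac{1-|w|^2}{|\xi-w|^2}\gtrsim\delta^{-1}$ for all $\xi\in A(\eta,\delta)$, whence $(P\mu)(w)\gtrsim\mu(A(\eta,\delta))/\delta$. Because $\mu$ is singular, the upper symmetric derivative $\limsup_{\delta\to0}\mu(A(\eta,\delta))/\delta$ equals $+\infty$ for $\mu$-a.e.\ $\eta$, and since $\zeta\in\mathrm{supp}(\mu)$ every arc $A(\zeta,1/n)$ has positive $\mu$-mass and therefore contains such an $\eta_n$. Selecting radii $\delta_n\to0$ making $\mu(A(\eta_n,\delta_n))/\delta_n\ge n$ and setting $w_n=(1-\delta_n)\eta_n\to\zeta$ yields $(P\mu)(w_n)\to+\infty$, hence $|\Theta(w_n)|\le|S_\mu(w_n)|\to0$. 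This gives $\zeta\in Z_0$ and, again, precludes a unimodular analytic continuation, so $\zeta\in\sigma(\Theta)$, completing both equalities.

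I expect the blow-up estimate for $S_\mu$ on $\mathrm{supp}(\mu)$ to be the crux: the statement is required for \emph{every} support point, not merely $\mu$-a.e.\ point, so one cannot argue radially at $\zeta$ itself and must instead combine the local Poisson lower bound with the differentiation-theorem fact that singular measures have infinite upper symmetric derivative $\mu$-a.e., using the support hypothesis only to locate such points arbitrarily close to $\zeta$. The analytic-continuation lemmas for $B$ and $S_\mu$ are standard but still require care with the uniform convergence of the Blaschke product and with differentiating under the singular integral.
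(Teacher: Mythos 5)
The paper does not actually prove this proposition: it is imported verbatim, with citation, from Fricain--Mashreghi \cite[Theorem 5.4]{fricain1}, so there is no internal argument to measure yours against. Judged on its own, your proof is correct and follows the standard textbook route. The interior case, the analytic continuation of $B$ and of $S_\mu$ across arcs disjoint from $\overline{Z(\Theta)}\cup\mathrm{supp}(\mu)$ (with $|B|=|S_\mu|=1$ there), and the observation that zeros accumulating at $\zeta$ simultaneously destroy regularity and force the liminf to vanish, are all handled properly. You also correctly isolate the one substantive step: showing $\liminf_{w\to\zeta}|S_\mu(w)|=0$ at \emph{every} $\zeta\in\mathrm{supp}(\mu)$, not merely $\mu$-a.e. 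Your mechanism — the pointwise kernel bound $(P\mu)\bigl((1-\delta)\eta\bigr)\geq c\,\mu(A(\eta,\delta))/\delta$, combined with the differentiation-theorem fact that a singular measure has infinite upper symmetric derivative $\mu$-a.e., with the support hypothesis used only to locate such points $\eta_n$ arbitrarily close to $\zeta$ — is exactly right, and the diagonal choice $w_n=(1-\delta_n)\eta_n\to\zeta$ closes the argument. The reference proof accomplishes the same thing in slightly different packaging, invoking the Fatou-type result that $P\mu(r\eta)\to D\mu(\eta)=\infty$ radially for $\mu$-a.e.\ $\eta$ and then passing to support points; your explicit Poisson estimate replaces that citation and makes the proof self-contained, at the cost of re-deriving a standard lemma.
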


\section{Invariant Model spaces}

 In this section our goal is to characterize the $C_{\phi_a}$-invariant model spaces $(\Theta H^2)^{\perp}$ where $\Theta \in H^2$ is an inner function. The simplest examples are the polynomials of degree at most $N$ denoted by $\mathbb{C}_N[z]$ which form the model space $(e_{N+1} H^2)^{\perp}$ where $e_n(z) = z^n$ for $n\in\mathbb{N}$. Indeed, since $C_{\phi_a}^n = C_{\phi_{a^n}}$ for $a \in (0,1)$ and $n\in\mathbb{N}$, it is easy to see that $C_{\phi_a}^np\in \mathbb{C}_N[z]$ whenever $p\in\mathbb{C}_N[z]$. We shall prove that these are infact the only examples. We begin  with some lemmas.

\begin{lem}\label{lemmamodel}
    Let $a \in (0,1)$. Then for all $\theta \in (0,2\pi]$

    $$\left| \frac{ae^{i \theta}}{1 - e^{i \theta} + ae^{i \theta}} \right| \leq 1$$
    and if

    $$\left| \frac{ae^{i \theta}}{1 - e^{i \theta} + ae^{i \theta}} \right| = 1$$ for some $\theta \in (0,2\pi]$ then $e^{i \theta} = 1$.
\end{lem}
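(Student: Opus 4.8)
The plan is to reduce the modulus inequality to an elementary trigonometric statement. Writing $w = e^{i\theta}$ with $|w| = 1$, I would first observe that the numerator satisfies $|aw| = a$, and rewrite the denominator as $1 - e^{i\theta} + ae^{i\theta} = 1 - (1-a)w$. Setting $b := 1-a \in (0,1)$, the assertion $\left|\frac{aw}{1-bw}\right| \le 1$ becomes simply $a \le |1 - bw|$, and it is convenient to compare squares. Before anything else, I would note that the denominator never vanishes: $1 - bw = 0$ would force $|w| = 1/b > 1$, contradicting $|w|=1$, so the quotient is well-defined for every $\theta$.

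Next I would compute $|1 - bw|^2 = (1-bw)(1-b\overline{w}) = 1 - b(w + \overline{w}) + b^2 = 1 - 2b\cos\theta + b^2$, using $|w|=1$ and $w + \overline{w} = 2\cos\theta$. Since $a = 1-b$, the target inequality $a^2 \le |1-bw|^2$ reads $(1-b)^2 \le 1 - 2b\cos\theta + b^2$. Expanding the left side and cancelling the common terms $1+b^2$, this collapses to $-2b \le -2b\cos\theta$, i.e. $\cos\theta \le 1$, which holds for every real $\theta$. This proves the inequality.

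For the equality case, the same chain of equivalences shows that $\left|\frac{aw}{1-bw}\right| = 1$ forces $\cos\theta = 1$ (here $b > 0$ is what lets us divide out); for $\theta \in (0,2\pi]$ this occurs precisely when $\theta = 2\pi$, i.e. $e^{i\theta} = 1$. There is no genuine obstacle in this argument: once the substitution $b = 1-a$ is made and both sides are squared, everything reduces to the trivial bound $\cos\theta \le 1$. The only care required is the bookkeeping in the equality analysis and the observation that the denominator is nonzero, both of which I would state explicitly.
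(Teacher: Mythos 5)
Your proof is correct and takes essentially the same approach as the paper, which simply states that the lemma ``follows by a computation writing $e^{i\theta} = x + iy$ with $x^2 + y^2 = 1$''; your substitution $b = 1-a$ and the reduction $a^2 \le 1 - 2b\cos\theta + b^2 \iff \cos\theta \le 1$ is exactly that computation, carried out in full (with $x = \cos\theta$) and with the equality case and non-vanishing of the denominator made explicit. Nothing further is needed.
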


\begin{proof}
    This follows by a computation writing $e^{i\theta} = x + iy$ with $x^2 + y^2 = 1$.
\end{proof}
For each $w \in \mathbb{D}$, let $\kappa_w(z) = \frac{1}{1 - \overline{w}z}$ be the \textit{reproducing kernel} at $w$ which satisfies $\langle f, \kappa_w \rangle = f(w)$ whenever $f \in H^2$. The next lemma can be found in \cite{Carmo-Eidt-Noor}.


\begin{lem}\label{rparecyclic}
    Let $\kappa_\alpha \in H^2$ be a reproducing kernel. Then $\kappa_\alpha$ is a cyclic vector for $C_{\phi_a}$ if and only if $\alpha \neq 0$.
\end{lem}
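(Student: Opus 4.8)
The plan is to treat the cases $\alpha=0$ and $\alpha\neq0$ separately. For $\alpha=0$ the reproducing kernel is the constant function $\kappa_0\equiv 1$, which is fixed by every $C_{\phi_a}$ since $1\circ\phi_a=1$. Hence the closed cyclic subspace it generates is the one-dimensional space $\mathbb{C}\cdot 1$, which is properly contained in $H^2$; this settles the ``only if'' direction.

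For the converse, assume $\alpha\neq0$. The key computation is to track the orbit of $\kappa_\alpha$ under $C_{\phi_a}$. Using $\phi_a(z)=az+1-a$ I would compute directly
$$C_{\phi_a}\kappa_\alpha(z)=\frac{1}{1-\overline{\alpha}(az+1-a)}=\frac{1}{1-\overline{\alpha}(1-a)}\,\kappa_{\beta}(z),\qquad \overline{\beta}=\frac{a\overline{\alpha}}{1-\overline{\alpha}(1-a)},$$
where the scalar is well defined and nonzero because $|\overline{\alpha}(1-a)|<1$. Iterating via the semigroup identity $C_{\phi_a}^{\,n}=C_{\phi_{a^n}}$ (valid for $a\in(0,1)$) then shows that each $C_{\phi_a}^{\,n}\kappa_\alpha$ is a nonzero scalar multiple of a genuine reproducing kernel $\kappa_{\gamma_n}$ with $\overline{\gamma_n}=\frac{a^n\overline{\alpha}}{1-\overline{\alpha}(1-a^n)}\in\mathbb{D}$.

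Next I would establish two properties of the sequence $(\gamma_n)$. First, since $a^n\to0$ we get $\gamma_n\to0\in\mathbb{D}$, so the $\gamma_n$ accumulate at an interior point of the disc. Second, writing $\overline{\gamma_n}=g(a^n)$ with $g(t)=\frac{t\overline{\alpha}}{(1-\overline{\alpha})+t\overline{\alpha}}$, a short check that $g$ is a Möbius transformation with nonvanishing derivative (here $\alpha\neq0$ and $\alpha\neq1$, the latter automatic since $\alpha\in\mathbb{D}$) shows that $g$ is injective, so $g(a^n)=g(a^m)$ forces $n=m$; thus the $\gamma_n$ are pairwise distinct. Combining these, $(\gamma_n)$ is an infinite sequence of distinct points of $\mathbb{D}$ with an accumulation point inside $\mathbb{D}$.

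Finally I would conclude by duality. Let $M$ denote the closed linear span of the orbit $\{C_{\phi_a}^{\,n}\kappa_\alpha:n\geq0\}$, which by the previous step equals $\overline{\mathrm{span}}\{\kappa_{\gamma_n}:n\geq0\}$. If $f\in H^2$ satisfies $f\perp M$, then the reproducing property gives $f(\gamma_n)=\langle f,\kappa_{\gamma_n}\rangle=0$ for every $n$; since $f$ is holomorphic on $\mathbb{D}$ and vanishes on a set accumulating at an interior point, the identity theorem forces $f\equiv0$. Hence $M^{\perp}=\{0\}$, so $M=H^2$ and $\kappa_\alpha$ is cyclic. I expect the main technical point to be the distinctness and accumulation analysis of the orbit $(\gamma_n)$: everything hinges on the accumulation point landing strictly inside $\mathbb{D}$, so that the identity theorem applies, rather than on $\mathbb{T}$ — and this is exactly what the contraction $a^n\to0$, pulling the orbit toward the kernel $\kappa_0$, guarantees.
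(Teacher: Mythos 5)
Your proof is correct. The paper does not actually prove this lemma---it only cites \cite{Carmo-Eidt-Noor}---but your argument is the standard self-contained one and matches the approach used there: $C_{\phi_a}^n\kappa_\alpha = C_{\phi_{a^n}}\kappa_\alpha$ is a nonzero multiple of $\kappa_{\gamma_n}$ with the $\gamma_n$ distinct points of $\mathbb{D}$ accumulating at $0$, so any $f$ orthogonal to the orbit vanishes on a set with an interior accumulation point and the identity theorem gives $f\equiv 0$.
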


The next two resuts are central to our main theorem.
\begin{lem}
    Let $\Theta$ be a non-constant inner function. If $(\Theta H^2)^{\perp}$ is $C_{\phi_a}$-invariant then $\Theta(z_1) \neq 0$ for all $z_1 \in \mathbb{D} - \{0\}$.
\end{lem}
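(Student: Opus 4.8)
The plan is to argue by contradiction, exploiting the cyclicity of reproducing kernels recorded in Lemma \ref{rparecyclic} together with the principle that a closed invariant subspace containing a cyclic vector must be the whole space. So suppose, for contradiction, that $\Theta(z_1) = 0$ for some $z_1 \in \mathbb{D} \setminus \{0\}$.

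First I would observe that the zero $z_1$ of $\Theta$ forces the reproducing kernel $\kappa_{z_1}$ into the model space $(\Theta H^2)^{\perp}$. Indeed, for any $g \in H^2$ we have $\langle \Theta g, \kappa_{z_1} \rangle = (\Theta g)(z_1) = \Theta(z_1)\,g(z_1)$, which vanishes precisely because $z_1$ is a zero of $\Theta$. Hence $\kappa_{z_1} \perp \Theta H^2$, that is, $\kappa_{z_1} \in (\Theta H^2)^{\perp}$.

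Next, since $(\Theta H^2)^{\perp}$ is by hypothesis a closed $C_{\phi_a}$-invariant subspace containing $\kappa_{z_1}$, it must contain the entire closed cyclic subspace generated by $\kappa_{z_1}$, namely $\overline{\mathrm{span}}\{C_{\phi_a}^n \kappa_{z_1} : n \geq 0\}$. But $z_1 \neq 0$, so Lemma \ref{rparecyclic} guarantees that $\kappa_{z_1}$ is a cyclic vector for $C_{\phi_a}$; that is, this cyclic subspace equals all of $H^2$. Therefore $(\Theta H^2)^{\perp} = H^2$.

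Finally I would close the argument by noting that $(\Theta H^2)^{\perp} = H^2$ forces $\Theta H^2 = \{0\}$, which is impossible since $\Theta$ is inner and hence nonzero (so $\Theta = \Theta \cdot 1 \in \Theta H^2$). This contradiction shows that $\Theta$ can have no zero in $\mathbb{D} \setminus \{0\}$. The only substantive ingredient is the cyclicity result Lemma \ref{rparecyclic}, quoted from \cite{Carmo-Eidt-Noor}; once it is available the proof is essentially a one-line reduction, so I do not anticipate any real obstacle beyond invoking it correctly and verifying that the model space of a non-constant inner function is proper.
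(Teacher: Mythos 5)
Your proposal is correct and follows essentially the same route as the paper: place $\kappa_{z_1}$ in $(\Theta H^2)^{\perp}$ via the zero of $\Theta$, invoke the cyclicity of $\kappa_{z_1}$ (Lemma \ref{rparecyclic}, valid since $z_1 \neq 0$) together with invariance to conclude $(\Theta H^2)^{\perp} = H^2$, and derive the contradiction $\Theta H^2 = \{0\}$. Your closing observation that $\Theta H^2 = \{0\}$ is impossible because $\Theta = \Theta \cdot 1 \neq 0$ is in fact slightly cleaner than the paper's phrasing, which concludes that ``$\Theta$ is constant.''
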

\begin{proof}
    For the sake of contradiction, suppose that $\Theta(z_1) = 0$ for some $z_1 \in \mathbb{D} - \{0\}$. Thus
  $$\langle \Theta f, \kappa_{z_1} \rangle = \Theta(z_1)f(z_1) = 0 \,\ \forall f \in H^2$$
   which implies $\kappa_{z_1} \in  (\Theta H^2)^{\perp}$. By hypothesis, this is a $C_{\phi_a}$-invariant subspace, so $K_{\kappa_{z_1}} \subseteq (\Theta H^2)^{\perp}$. By Lemma \ref{rparecyclic} $\kappa_{z_1}$ is a cyclic vector (because $z_1 \neq 0$) and thus $H^2 = (\Theta H^2)^{\perp}$ which implies $\{0\} = \Theta H^2$. So $\Theta$ is constant which give us a contradiction. We conclude that $\Theta$ does not have zeros in $\mathbb{D} - \{0\}$ as desired. 
\end{proof}

\begin{prop}\label{m2}
    Let $M = (\Theta H^2)^{\perp}$ be a model space where $\Theta $ is a singular inner function. Then $M$ is not $C_{\phi_a}$-invariant.
\end{prop}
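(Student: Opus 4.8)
The plan is to test the invariance hypothesis against a single, explicitly computable element of the model space, namely the reproducing kernel of $M=(\Theta H^2)^{\perp}$ at the origin. Since $\Theta=S_\mu$ is singular inner it has no zeros, and in particular $\Theta(0)=e^{-\mu(\mathbb{T})}$ is a nonzero real number; write $c:=\Theta(0)\in(0,1)$. The reproducing kernel of $M$ at $0$ is $k_0(z)=1-\overline{\Theta(0)}\,\Theta(z)=1-c\,\Theta(z)$, and a direct check (using $\langle \Theta F,\Theta G\rangle=\langle F,G\rangle$ for inner $\Theta$) confirms $k_0\in M$. This is the natural test vector precisely because $\Theta(0)\neq 0$; a zero at the origin is exactly the case that must be excluded separately, which is why the preceding lemma treats zeros by hand.

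The key step is to push $k_0$ through $C_{\phi_a}$ and extract a scalar identity that cannot hold. Because $C_{\phi_a}$ fixes constants, $C_{\phi_a}k_0=1-c\,(\Theta\circ\phi_a)$. If $M$ were $C_{\phi_a}$-invariant this would again lie in $M=(\Theta H^2)^{\perp}$, so in particular it would be orthogonal to $\Theta\in\Theta H^2$. Using $\langle 1,\Theta\rangle=\overline{\Theta(0)}=c$, the relation $\langle 1-c(\Theta\circ\phi_a),\Theta\rangle=0$ collapses, after dividing by $c\neq 0$, to the single equation $\langle \Theta\circ\phi_a,\Theta\rangle=1$.

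It remains to refute this equation, and here the geometry of $\phi_a$ enters. The point is that $\phi_a$ maps $\overline{\mathbb{D}}$ into $\overline{\mathbb{D}}$, touching the unit circle only at the fixed point $1$, so $\phi_a(e^{i\theta})\in\mathbb{D}$ for every $\theta\in(0,2\pi)$. Since a nonconstant singular inner function is strictly contractive on the open disk, $|\Theta(\phi_a(e^{i\theta}))|=|S_\mu(\phi_a(e^{i\theta}))|<1$ for all such $\theta$, whence $\|\Theta\circ\phi_a\|_2^2=\frac{1}{2\pi}\int_0^{2\pi}|\Theta(\phi_a(e^{i\theta}))|^2\,d\theta<1$. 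By Cauchy--Schwarz together with $\|\Theta\|_2=1$ we obtain $|\langle \Theta\circ\phi_a,\Theta\rangle|\le \|\Theta\circ\phi_a\|_2<1$, contradicting $\langle \Theta\circ\phi_a,\Theta\rangle=1$. This contradiction shows $M$ is not $C_{\phi_a}$-invariant.

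The main obstacle, and really the only substantive point, is the strict inequality $\|\Theta\circ\phi_a\|_2<1$. It rests on two facts that must be invoked cleanly: that $\phi_a$ sends the whole boundary circle except $1$ into the open disk (so that the boundary values of $\Theta\circ\phi_a$ are evaluations of $\Theta$ at interior points), and that $|S_\mu|<1$ throughout $\mathbb{D}$ for a nonconstant singular inner function. Both are standard, but the argument deliberately breaks down without the singularity hypothesis: for $\Theta(z)=z^n$ one has $\Theta(0)=0$, so $k_0$ is merely the constant $1$ and $C_{\phi_a}1=1$ produces no information, consistent with the fact that those model spaces are indeed invariant.
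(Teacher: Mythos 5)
Your proof is correct, but it takes a genuinely different route from the paper's. The paper first transfers the problem to the Beurling side: by \cite[Theorem 4.3]{Modeltype}, $(\Theta H^2)^{\perp}$ is $C_{\phi_a}$-invariant if and only if $\Theta H^2$ is $C_{\sigma}$-invariant for $\sigma(z)=\frac{az}{1-(1-a)z}$, so that $\Theta\circ\sigma=\Theta g$ for some $g\in H^2$; it then shows $|g^*|<1$ a.e.\ on $\mathbb{T}$ (hence $\|g\|_\infty\le 1$), evaluates at the fixed point $\sigma(0)=0$ to get $g(0)=1$ (this is where zero-freeness of $\Theta$ enters), and uses the maximum modulus principle to force $g\equiv 1$, contradicting the strict boundary inequality. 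You never leave the model space: you test invariance on the single vector $\kappa_0^{\Theta}=1-\overline{\Theta(0)}\,\Theta$, reduce invariance to the scalar identity $\langle\Theta\circ\phi_a,\Theta\rangle=1$, and refute it by Cauchy--Schwarz together with the strict estimate $\|\Theta\circ\phi_a\|_2<1$. Both arguments ultimately rest on the same geometric fact --- the relevant self-map fixes $1$ and sends $\mathbb{T}\setminus\{1\}$ into $\mathbb{D}$, so composing a non-constant inner function produces boundary modulus strictly less than $1$ a.e.\ (the paper's Lemma \ref{lemmamodel} plays this role for $\sigma$; you verify it directly for $\phi_a$). What your argument buys: it is self-contained (no appeal to \cite{Modeltype}), and it actually proves a stronger statement --- for \emph{any} non-constant inner $\Theta$ with $\Theta(0)\neq 0$ the model space fails to be $C_{\phi_a}$-invariant, singularity being used only through zero-freeness at the origin. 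What the paper's route buys: the divisibility criterion $\Theta\circ\sigma/\Theta\in H^{\infty}$ is exactly what gets recycled in the corollary that follows, to dispose of $\Theta=z^nS$ with $n\ge 1$, a case your test vector cannot see (there $\Theta(0)=0$ and $\kappa_0^{\Theta}\equiv 1$, as you note yourself), so the paper's proof integrates more tightly with the rest of the section. One cosmetic point: you normalize $\Theta=S_\mu$ so that $\Theta(0)=e^{-\mu(\mathbb{T})}\in(0,1)$; if one allows a unimodular factor $\lambda$, nothing changes upon taking $c=\Theta(0)$ to be a nonzero complex number.
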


\begin{proof}

Suppose that $M = (\Theta H^2)^{\perp}$ is $C_{\phi_a}$-invariant, we will arrive at a contradiction. Consider the function $\sigma(z) = \frac{az}{1 - (1 - a)z}$ which is a holomorphic self map of $\mathbb{D}$. Note that
$${(\Theta \circ \sigma)^{*}(e^{i \theta})}  = \lim\limits_{r
   \to 1^{-}} \Theta \circ \sigma(re^{i \theta}) =  \lim\limits_{r 
   \to 1^{-}} \Theta( \sigma(re^{i \theta} )) =  \lim\limits_{r 
   \to 1^{-}} \Theta \left( \frac{are^{i \theta}}{1 - (1 - a)re^{i\theta}} \right).$$
   If $\theta \neq 2\pi$ then by Lemma \ref{lemmamodel} we have $\left( \frac{ae^{i \theta}}{1 - (1 - a)e^{i\theta}} \right) \in \mathbb{D}$. Using that $\Theta$ is continuous in $\mathbb{D}$ and $|\Theta(w)| < 1$ for all $w \in \mathbb{D}$ (because $\Theta$ is non-constant)  we obtain:

   $$|{(\Theta \circ \sigma)^{*}(e^{i \theta})}| \overset{a.e}{=} \left|\Theta \left( \frac{ae^{i \theta}}{1 - (1 - a)e^{i\theta}} \right) \right| \overset{a.e}{<} 1.$$
  By (\cite{Modeltype}, Theorem 4.3) $M$ is invariant under $C_{\phi_{a}}$ if, and only if, $\Theta H^2$ is invariant under $C_{\sigma}$. Thus there exists $g \in H^2$ such that 
\begin{align}
      \Theta \circ \sigma = C_{\sigma}(\Theta) = \Theta g \label{x1} \implies \Theta(\sigma(z)) = \Theta(z)g(z) \,\ \,\ \forall z \in \mathbb{D}.
  \end{align}
  Passing to the radial limits and considering the modulus we conclude that 
$$1 \overset{a.e}{>} |{(\Theta \circ \sigma)^{*}(e^{i \theta})}| \overset{a.e}{=}  |g^*(e^{i \theta})|.$$
    As $g \in H^2$ this implies that $|g(z)| \leq 1$ for every $z \in \mathbb{D}$ see for example (\cite{hhspaces}, pg.14, Corollary 1.1.24). But, if we consider $z = 0$ in (\ref{x1}) we conclude that 
    \begin{align*}
        \Theta(0) = \Theta (\sigma(0)) = \Theta(0) g(0) 
    \end{align*}
    and thus g(0) = 1 because $\Theta$ is inner singular, in particular, zero-free. So by the maximum module principle, $g$ is constant and $g \equiv g(0) = 1$. Looking at (\ref{x1}) again we conclude that $C_{\sigma}(\Theta) = \Theta$ and then $\Theta$ is a fixed point. But considering the equality
$$ \Theta \circ \sigma = C_{\sigma}(\Theta) = \Theta,$$
   passing to radial limits and using the estimates proved above we obtain
$$1 \overset{a.e}{>} | (\Theta \circ \sigma)^{*}(e^{i \theta}) | \overset{a.e}{=} | \Theta^*(e^{i \theta})| \overset{a.e}{=} 1 $$
   which is a contradiction. So $\Theta$ is constant and we arrived at a contradiction.
   \end{proof}

\begin{cor}
    Let $\Theta$ be an inner function. If $\Theta(z) = z^nS(z)$ for some singular inner $S$ and $n \geq 1$, then $(\Theta H^2)^{\perp}$ is not $C_{\phi_a}$-invariant.
\end{cor}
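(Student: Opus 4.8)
The plan is to reduce the statement to Proposition \ref{m2} by peeling off the monomial factor $z^n$. Assume for contradiction that $(\Theta H^2)^{\perp}$ is $C_{\phi_a}$-invariant, where $\Theta = z^nS$ with $n\geq 1$ and $S$ a non-constant singular inner function (this non-triviality is needed: if $S$ were constant then $\Theta=z^n$ and $(\Theta H^2)^{\perp}=\mathbb{C}_{n-1}[z]$ \emph{is} invariant, so the hypothesis must mean $S$ genuinely singular). Exactly as in Proposition \ref{m2}, set $\sigma(z)=\frac{az}{1-(1-a)z}$; by the equivalence (\cite{Modeltype}, Theorem 4.3), $\Theta H^2$ is $C_{\sigma}$-invariant, so there is $g\in H^2$ with
$$\Theta\circ\sigma=\Theta g,\qquad\text{that is}\qquad \sigma^n\,(S\circ\sigma)=z^n\,S\,g \quad\text{on }\mathbb{D}.$$
Note first why the easy routes fail: the only zero of $\Theta$ in $\mathbb{D}$ is at the origin, so the preceding zero-set lemma gives nothing; and since $\sigma(0)=0$ one computes $g(0)=a^n\neq 1$, so the argument of Proposition \ref{m2} that forced $g\equiv 1$ does not transfer.

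The key step is to divide out $\sigma^n$ cleanly. Since $\sigma(z)/z=a/(1-(1-a)z)$ is analytic and nonvanishing on $\overline{\mathbb{D}}$, the reciprocal power $(z/\sigma)^n=\big((1-(1-a)z)/a\big)^n$ lies in $H^{\infty}$. I would first upgrade $g$ to a bounded function: passing to radial limits in $\Theta\circ\sigma=\Theta g$ and using $|\Theta^*|=1$ a.e.\ gives $|g^*|=|(\Theta\circ\sigma)^*|\leq 1$ a.e., so $g\in H^{\infty}$ with $\|g\|_{\infty}\leq 1$. Then, setting $g_1:=(z/\sigma)^n g$, the displayed identity rearranges (valid on $\mathbb{D}\setminus\{0\}$ and extended across $0$ by continuity) to
$$S\circ\sigma=S\,g_1,\qquad g_1=(z/\sigma)^n g\in H^{\infty}.$$

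Finally, because $g_1\in H^{\infty}$ is a bounded multiplier, for every $h\in H^2$ we get $C_{\sigma}(Sh)=(S\circ\sigma)(h\circ\sigma)=S\,g_1\,(h\circ\sigma)\in SH^2$, so $SH^2$ is $C_{\sigma}$-invariant. Applying (\cite{Modeltype}, Theorem 4.3) in the reverse direction, $(SH^2)^{\perp}$ is then $C_{\phi_a}$-invariant, contradicting Proposition \ref{m2} for the non-constant singular inner function $S$. I expect the main obstacle to be precisely the upgrade from $C_{\sigma}(S)=S\circ\sigma\in SH^2$ to invariance of the \emph{whole} subspace $SH^2$: this needs the quotient $g_1=(S\circ\sigma)/S$ to be a bounded multiplier rather than merely an $H^2$ function, which is exactly why one must first establish $g\in H^{\infty}$ (whence $g_1\in H^{\infty}$) before invoking the composition on arbitrary $Sh$.
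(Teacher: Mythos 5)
Your proof is correct and takes essentially the same route as the paper: both arguments peel off the factor $z^n$ using that $\sigma(z)/z = a/(1-(1-a)z)$ is bounded above and away from zero on $\mathbb{D}$, conclude that $(SH^2)^{\perp}$ is $C_{\phi_a}$-invariant, and contradict Proposition \ref{m2}. The only difference is packaging: the paper quotes \cite[Theorem 4.3]{Modeltype} directly in its quotient form ($(\Theta H^2)^{\perp}$ invariant iff $(\Theta\circ\sigma)/\Theta\in H^{\infty}$), which makes the reduction a two-line cancellation, whereas you re-derive that criterion from the subspace-invariance formulation via radial limits and the bounded-multiplier argument.
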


\begin{proof}
     By \cite[Theorem 4.3]{Modeltype} $(\Theta H^2)^{\perp}$ is invariant under $C_{\phi_{a}}$ if and only if $\frac{\Theta \circ \sigma}{\Theta} \in H^{\infty}$ where $\sigma(z) = \frac{az}{1 - (1 - a)z}$. Note that

    $$ \frac{\Theta \circ \sigma (z)}{\Theta(z)} =  \frac{a^n z^n  S \circ \sigma (z)}{ (1 - (1 - a)z)^n z^n S(z)} = \frac{a^n S \circ \sigma (z)}{ (1 - (1 - a)z)^n S(z)}.$$
    If this function belongs to $H^{\infty}$ then $\frac{ S \circ \sigma}{S} \in H^{\infty}$ and this implies that $(SH^2)^{\perp}$ is $C_{\phi_a}$ invariant using again (\cite{Modeltype}, Theorem 4.3). This contradicts Theorem \ref{m2}.
\end{proof}

Putting these results together, we arrive at the main theorem of this section.

\begin{thm}\label{mp}
    The only model spaces that are invariant under $C_{\phi_a}$ for any $a\in(0,1)$ are of the form $(e_nH^2)^{\perp}$ for some $n \in \mathbb{N}$.
\end{thm}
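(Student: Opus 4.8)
The plan is to combine the canonical factorization of an inner function with the three preceding results so as to eliminate every possibility except $\Theta(z)=\lambda z^n$. Write $\Theta=\lambda B S_\mu$ where $\lambda\in\mathbb{T}$, $B$ is the Blaschke product formed by the zeros of $\Theta$, and $S_\mu$ is the singular inner factor. Since multiplication by the unimodular constant $\lambda$ does not alter the subspace $\Theta H^2$, I may harmlessly absorb $\lambda$ and work with $\Theta=BS_\mu$ at the level of subspaces.

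First I would invoke the Lemma immediately preceding Proposition \ref{m2}: if $\Theta$ is non-constant and $(\Theta H^2)^{\perp}$ is $C_{\phi_a}$-invariant, then $\Theta$ has no zeros in $\mathbb{D}\setminus\{0\}$. Hence the only admissible zero of $\Theta$ is the origin, and since $\Theta\not\equiv 0$ this zero has some finite multiplicity $n\geq 0$. Therefore $B(z)=z^n$ and $\Theta=z^n S_\mu$ up to the unimodular constant.

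Next I would dispose of the singular factor by a case split on whether $S_\mu$ is constant. If $S_\mu$ is non-constant and $n=0$, then $\Theta=S_\mu$ is a non-constant singular inner function and Proposition \ref{m2} forces $(\Theta H^2)^{\perp}$ to be non-invariant, a contradiction. If $S_\mu$ is non-constant and $n\geq 1$, then $\Theta=z^n S_\mu$ is precisely the form treated in the Corollary following Proposition \ref{m2}, which again yields non-invariance, a contradiction. In either case $S_\mu$ must be constant, so that $\Theta=\lambda z^n$; since $\Theta$ is non-constant we have $n\geq 1$, whence $(\Theta H^2)^{\perp}=(e_n H^2)^{\perp}=\mathbb{C}_{n-1}[z]$. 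The reverse direction, namely that each such space is genuinely $C_{\phi_a}$-invariant, was already observed at the start of the section through the identity $C_{\phi_a}^k=C_{\phi_{a^k}}$.

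Because all the analytic content lives in the three earlier results, the theorem itself is essentially an organizational argument, and I do not anticipate a serious obstacle. The single point that demands care is the reduction $B(z)=z^n$: one must note that the zero-free Lemma confines all zeros to the origin, that membership $\Theta\in H^2$ forces this zero to have finite order, and that the prefactor $\lambda\in\mathbb{T}$ is invisible at the level of $\Theta H^2$. After that, the split into the cases $n=0$ and $n\geq 1$ is dictated exactly by the division of labor between Proposition \ref{m2}, which handles the purely singular symbol, and its Corollary, which handles the mixed symbol $z^n S_\mu$.
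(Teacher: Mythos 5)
Your proposal is correct and follows essentially the same route as the paper, whose proof of Theorem \ref{mp} is precisely the assembly you describe: the zero-free lemma forces $B(z)=z^n$, Proposition \ref{m2} rules out a non-constant purely singular factor, and its corollary rules out the mixed case $z^nS_\mu$, leaving $\Theta=\lambda z^n$. The paper states this only as ``putting these results together,'' so your write-up simply makes explicit the details (finite multiplicity at the origin, absorbing $\lambda$, the case split on $S_\mu$) that the paper leaves implicit.
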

Since this characterization is independent of $a\in(0,1)$, Theorem \ref{GP1} immediately gives the following.
\begin{cor}\label{Cesaro Beurling}
    A Beurling subspace $\Theta H^2$ is invariant under the Cesàro operator if and only if $\Theta(z)=z^n$ for some $n \in \mathbb{N}$.
\end{cor}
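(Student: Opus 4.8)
The plan is to combine Theorem~\ref{mp} with the correspondence in Theorem~\ref{GP1}. Theorem~\ref{GP1} tells us that a closed subspace $M$ is invariant under $\mathcal{C}$ if and only if $M^{\perp}$ is invariant under the semigroup $\Phi=(\varphi_t)_{t\geq 0}$. The first step is to specialize this to Beurling subspaces. If $M=\Theta H^2$ is a Beurling subspace, then $M^{\perp}=(\Theta H^2)^{\perp}$ is a model space, so the statement reduces to characterizing which model spaces are invariant under every member of the semigroup $\Phi$. The key observation is that $\varphi_t(z)=e^{-t}z+1-e^{-t}=\phi_a(z)$ with $a=e^{-t}\in(0,1)$, and as $t$ ranges over $[0,\infty)$ the parameter $a$ ranges over $(0,1]$. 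Thus invariance of $M^{\perp}$ under the whole semigroup $\Phi$ is the same as invariance under $C_{\phi_a}$ for every $a\in(0,1)$.

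Next I would invoke Theorem~\ref{mp}. Because that theorem states that the only model spaces invariant under $C_{\phi_a}$ (for any fixed $a\in(0,1)$) are exactly $(e_n H^2)^{\perp}$ for $n\in\mathbb{N}$, and crucially this list is \emph{independent of the choice of $a$}, a model space is invariant under $C_{\phi_a}$ for every $a\in(0,1)$ if and only if it has the form $(e_n H^2)^{\perp}$. I would remark that the case $a=1$ is trivial since $\varphi_0$ is the identity, so requiring invariance for all $t\geq 0$ is equivalent to requiring it for all $a\in(0,1)$. Translating back through $M^{\perp}=(\Theta H^2)^{\perp}$, the model space $(\Theta H^2)^{\perp}$ equals some $(e_n H^2)^{\perp}$ precisely when $\Theta=e_n$, i.e. $\Theta(z)=z^n$ up to a unimodular constant, which does not change the subspace $\Theta H^2$.

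Assembling these pieces gives the corollary: $\Theta H^2$ is $\mathcal{C}$-invariant $\iff (\Theta H^2)^{\perp}$ is $\Phi$-invariant $\iff (\Theta H^2)^{\perp}$ is $C_{\phi_a}$-invariant for all $a\in(0,1)$ $\iff \Theta(z)=z^n$ for some $n\in\mathbb{N}$. The argument is essentially a bookkeeping exercise once Theorems~\ref{GP1} and~\ref{mp} are in hand, so there is no serious analytic obstacle. The one point that genuinely requires the $a$-independence of Theorem~\ref{mp} is the reduction from ``invariant under the entire semigroup'' to ``invariant under a single $C_{\phi_a}$''; the main thing to be careful about is that the characterization in Theorem~\ref{mp} is uniform in $a$, so that passing from fixed-$a$ invariance to all-$a$ invariance does not shrink the family of admissible $\Theta$ beyond the $z^n$.
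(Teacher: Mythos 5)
Your proposal is correct and follows exactly the paper's route: the paper's own (one-line) proof likewise combines Theorem~\ref{GP1} with the $a$-independence of the characterization in Theorem~\ref{mp}, and your write-up simply spells out the bookkeeping that the paper leaves implicit. No gaps; the identification of semigroup invariance with invariance under $C_{\phi_a}$ for all $a\in(0,1)$ (with $t=0$ trivial) is handled correctly.
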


In what follows we will explore more details about the spaces $(z^nSH^2)^{\perp}$ and their relation with $C_{\phi_a}$. The reproducing kernels of the model space $(\Theta H^2)^{\perp}$ are given by the functions
$$\kappa_{\lambda}^{\Theta}(z) = \frac{1 - \overline{\Theta(\lambda)} \Theta(z)}{1 - \overline{\lambda}z} \,\ \,\ \,\ $$
where $\lambda\in\mathbb{D}$ (see \cite[Corollary $14.12$]{fricain1}).

\begin{prop}
    If $\Theta$ is inner and non constant, the space $z^n(\Theta H^2)^{\perp}$ is not $C_{\phi_a}$-invariant.
\end{prop}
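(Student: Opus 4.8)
The plan is to assume, for contradiction, that $z^n(\Theta H^2)^{\perp}$ is $C_{\phi_a}$-invariant and then produce a single explicit vector in this subspace whose image under $C_{\phi_a}$ provably leaves it. I may assume $n\geq 1$, since $n=0$ is precisely the model-space case already settled in Theorem \ref{mp}. The observation driving everything is that every element of $z^n H^2$ (and hence of the smaller space $z^n(\Theta H^2)^{\perp}$) vanishes at the origin when $n\geq 1$; so it suffices to exhibit an $f\in(\Theta H^2)^{\perp}$ for which $C_{\phi_a}(z^n f)$ does \emph{not} vanish at $0$.

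The natural test function is a reproducing kernel of the model space, evaluated at the right point. Since $\phi_a(0)=1-a\in(0,1)\subset\mathbb{D}$, I would set $w=1-a$ and take $f=\kappa_w^{\Theta}$, where $\kappa_w^{\Theta}(z)=\frac{1-\overline{\Theta(w)}\Theta(z)}{1-\overline{w}z}$ is the reproducing kernel of $(\Theta H^2)^{\perp}$ at $w$ (which lies in the model space for every inner $\Theta$). Then $g:=z^n\kappa_w^{\Theta}\in z^n(\Theta H^2)^{\perp}$, and if the subspace were invariant we would have $C_{\phi_a}g\in z^n(\Theta H^2)^{\perp}\subseteq z^nH^2$, forcing $(C_{\phi_a}g)(0)=0$. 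On the other hand, a direct evaluation gives
\begin{align*}
(C_{\phi_a}g)(0)=\phi_a(0)^n\,\kappa_w^{\Theta}(\phi_a(0))=(1-a)^n\,\kappa_w^{\Theta}(w).
\end{align*}

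To finish, I would compute the self-value of the kernel: $\kappa_w^{\Theta}(w)=\frac{1-|\Theta(w)|^2}{1-|w|^2}$, which is \emph{strictly positive} because a non-constant inner function satisfies $|\Theta(w)|<1$ for every $w\in\mathbb{D}$ by the maximum modulus principle. Since also $(1-a)^n\neq0$, this yields $(C_{\phi_a}g)(0)\neq0$, contradicting $(C_{\phi_a}g)(0)=0$. Hence no such invariance can hold.

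The one genuinely load-bearing choice is selecting the evaluation point to be $w=\phi_a(0)=1-a$: this is exactly what makes applying $C_{\phi_a}$ and then evaluating at $0$ return the kernel's self-value $\kappa_w^{\Theta}(w)$, whose positivity is guaranteed by non-constancy of $\Theta$. A secondary subtlety, and the reason one cannot simply use the constant function $1$ as the test vector, is that $1\in(\Theta H^2)^{\perp}$ only when $\Theta(0)=0$; the reproducing kernel belongs to the model space for \emph{every} inner $\Theta$ and never vanishes at its own base point, which is precisely the property needed. (Alternatively, one could route the argument through Theorem \ref{mp} using the orthogonal decomposition $(z^n\Theta H^2)^{\perp}=\mathbb{C}_{n-1}[z]\oplus z^n(\Theta H^2)^{\perp}$, but the direct kernel computation above is shorter and avoids the separate verification that $\Theta$ would then have to be a monomial.)
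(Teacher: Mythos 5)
Your proof is correct and follows essentially the same route as the paper: assume invariance, apply $C_{\phi_a}$ to $z^n$ times a reproducing kernel of the model space, evaluate at $0$, and contradict the forced vanishing at $\phi_a(0)=1-a$. The only cosmetic difference is your choice of kernel base point ($\kappa_{1-a}^{\Theta}$, whose self-value $\frac{1-|\Theta(1-a)|^2}{1-|1-a|^2}$ is strictly positive) versus the paper's $\kappa_0^{\Theta}$, which it shows satisfies $\kappa_0^{\Theta}(1-a)=1-\overline{\Theta(0)}\Theta(1-a)\neq 0$.
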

\begin{proof}
    Suppose that this space is invariant and let $z^ng \in z^n(\Theta H^2)^{\perp}$ where $g \in (\Theta H^2)^{\perp}$; then $C_{\phi_a}(z^ng) = z^nG$ for some $G \in (\Theta H^2)^{\perp}$. This means that
$$(az + 1 - a)^n g \circ \phi_a(z) = z^n G(z) \,\ \,\ \,\ \forall z \in \mathbb{D}.$$
    Evaluating at $0$ we conclude that $g(1 - a) = 0$. But not every function in $(\Theta H^2)^{\perp}$ satisfies this condition: the reproducing kernel $\kappa_{0}^{\Theta}$ given by $\kappa_{0}^{\Theta}(z) = 1 - \overline{\Theta(0)} \Theta(z)$ is such that $\kappa_{0}^{\Theta}(1 - a) = 1 -  \overline{\Theta(0)} \Theta(1 - a) \neq 0$ (because $\Theta$ is inner and non constant, which implies that $\overline{\Theta(0)}$ and $ \Theta(1 - a)$ are in $\mathbb{D}$).
\end{proof}

\begin{cor}
   If $S$ is inner singular and $n \in \mathbb{N}$ then $(z^n S H^2)^{\perp}$ is the direct sum of an invariant and a non-invariant $C_{\phi_a}$-subspace.
\end{cor}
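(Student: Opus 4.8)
The plan is to realize $(z^nSH^2)^\perp$ as an orthogonal direct sum coming from the factorization $z^nS = z^n\cdot S$ of the inner function, and then to identify one summand as a space already known to be invariant and the other as one already known to be non-invariant.

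First I would record the standard decomposition of a model space under a factorization of its inner function. Since $S$ is inner we have $SH^2\subseteq H^2$, and multiplying by the inner function $e_n(z)=z^n$ gives a chain of nested closed subspaces $z^nSH^2\subseteq z^nH^2\subseteq H^2$. Splitting off the pieces,
\begin{align*}
(z^nSH^2)^\perp = \big(H^2\ominus z^nH^2\big)\oplus\big(z^nH^2\ominus z^nSH^2\big).
\end{align*}
The first summand is exactly $(z^nH^2)^\perp$. For the second, I would use that multiplication by $z^n$ is an isometry of $H^2$ onto $z^nH^2$ (because $|{(z^n)}^*|=1$ a.e. on $\mathbb{T}$), so it carries $H^2\ominus SH^2$ onto $z^nH^2\ominus z^nSH^2$; hence $z^nH^2\ominus z^nSH^2 = z^n(SH^2)^\perp$. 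This yields
\begin{align*}
(z^nSH^2)^\perp = (z^nH^2)^\perp\oplus z^n(SH^2)^\perp .
\end{align*}

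It then remains to read off the invariance of each summand from results already proved. The space $(z^nH^2)^\perp=(e_nH^2)^\perp$ is of the form covered by Theorem \ref{mp} and is therefore $C_{\phi_a}$-invariant; it equals the space $\mathbb{C}_{n-1}[z]$ of polynomials of degree at most $n-1$, which is nonzero since $n\geq 1$. The space $z^n(SH^2)^\perp$ is of the form treated in the Proposition immediately preceding this corollary, applied with $\Theta=S$: since $S$ is singular inner, in particular inner and non-constant, that Proposition gives that $z^n(SH^2)^\perp$ is not $C_{\phi_a}$-invariant, and it is nonzero because $S$ is non-constant. Combining the two displays, $(z^nSH^2)^\perp$ is the orthogonal direct sum of the invariant subspace $(z^nH^2)^\perp$ and the non-invariant subspace $z^n(SH^2)^\perp$, as claimed.

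I do not anticipate a genuine obstacle here: the only point requiring care is the first display, namely verifying that $M_{z^n}$ sends $(SH^2)^\perp$ isometrically onto $z^nH^2\ominus z^nSH^2$, which is immediate from $z^n$ being inner. The real content of the corollary is simply the packaging of Theorem \ref{mp} and the preceding Proposition through this factorization-induced decomposition.
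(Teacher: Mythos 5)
Your proof is correct and takes essentially the same route as the paper: both rest on the orthogonal decomposition $(z^nSH^2)^{\perp}=(z^nH^2)^{\perp}\oplus z^n(SH^2)^{\perp}$, with the first summand invariant (it is $\mathbb{C}_{n-1}[z]$) and the second non-invariant by the Proposition immediately preceding the corollary. The only difference is cosmetic: the paper quotes the decomposition from the literature (Lemma 14.6 of Fricain--Mashreghi), whereas you verify it directly via the isometry of multiplication by $z^n$.
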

\begin{proof}
It is known that 
    $$(z^n S H^2)^{\perp} = (z^nH^2)^{\perp} \oplus z^n (SH^2)^{\perp}$$
    (see \cite[Lemma 14.6]{fricain1}). The result follows from the proposition above.
\end{proof}

To end this section, we will present some consequences involving universality. Since the operator $C_{\phi_a}$ has universal translates (see \cite{noor}), the minimal elements of $\mathrm{Lat}  (C_{\phi_a})$ are interesting from the point of view of the ISP.  

\begin{cor} If a model space is minimal and $C_{\phi_a}$-invariant, then it is one-dimensional.
\end{cor}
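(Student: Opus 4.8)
The plan is to combine the main theorem of this section with the universality reformulation of the ISP stated in the introduction. The final corollary asserts that any model space which is both minimal and $C_{\phi_a}$-invariant must be one-dimensional. The key observation is that Theorem \ref{mp} completely classifies the $C_{\phi_a}$-invariant model spaces: they are exactly the spaces $(e_n H^2)^{\perp} = \mathbb{C}_{n-1}[z]$ of polynomials of degree at most $n-1$, which are finite-dimensional of dimension $n$.

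First I would take a $C_{\phi_a}$-invariant model space $M$ that is minimal. By Theorem \ref{mp}, $M = (e_n H^2)^{\perp}$ for some $n \in \mathbb{N}$, so $M = \mathbb{C}_{n-1}[z]$ has finite dimension $n$. Next I would use the fact that a finite-dimensional invariant subspace of any bounded operator on a complex Hilbert space always contains an eigenvector: the restriction $C_{\phi_a}|_M$ is an operator on a nonzero finite-dimensional complex vector space, hence has an eigenvalue and a corresponding eigenvector $v \in M$. The one-dimensional span $\mathbb{C}v$ is then a nonzero closed $C_{\phi_a}$-invariant subspace contained in $M$. Minimality of $M$ forces $M = \mathbb{C}v$, so $\dim M = 1$.

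The main (and only) subtle point is to confirm that the notion of minimality used here is compatible with this argument, namely that a minimal invariant subspace admits no nonzero proper closed invariant subspace, so that the eigenline $\mathbb{C}v$ must coincide with $M$. Since $M$ is finite-dimensional, every subspace of $M$ is automatically closed, so there is no topological obstruction, and the eigenvector argument goes through directly. One should also note that the eigenvalue argument requires the scalar field to be $\mathbb{C}$, which is the standing assumption throughout. I would therefore conclude that $M$ is one-dimensional, completing the proof.

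Overall this is a short deduction rather than a new construction: the substantive work is entirely contained in Theorem \ref{mp}, which reduces the class of candidate model spaces to finite-dimensional polynomial spaces, after which the existence of an eigenvector in any finite-dimensional complex invariant subspace does the rest. I do not anticipate a genuine obstacle here; the only thing to be careful about is matching the paper's definition of minimality and invoking minimality in the correct direction.
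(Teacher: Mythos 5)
Your proof is correct and takes essentially the same route as the paper: both arguments rest entirely on Theorem \ref{mp}, which identifies every $C_{\phi_a}$-invariant model space as the finite-dimensional polynomial space $(e_nH^2)^{\perp}=\mathbb{C}_{n-1}[z]$, after which minimality forces dimension one. The only difference is in the (one-line) concluding step: the paper observes that these spaces are minimal precisely when $n=1$, since for $n\geq 2$ the constants form a proper closed $C_{\phi_a}$-invariant subspace, whereas you invoke the existence of an eigenvector for $C_{\phi_a}|_M$ on a finite-dimensional complex space --- both are valid and equally short.
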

\begin{proof}
The model spaces $(e_nH^2)^{\perp}$ in Theorem \ref{mp} are minimal precisely when $n=1$, in which case they consist of only the constant functions.
\end{proof}

A more general question inspired by this corollary is what happens if some minimal  cyclic invariant subspace $K_f:=\overline{\mathrm{span}}(C_{\phi_a}^nf)_{n\in\mathbb{N}}$ contains a function $g \neq 0$ that belongs to some model space. In this case, of course $K_g = K_f$ due to minimality. We get a complete description of such $K_f$ in the following situation.

\begin{prop}
 Suppose that $\Theta$ is a non-constant inner function such that $1 \notin \sigma(\Theta)$ and let $f \in (\Theta H^2)^{\perp}$. Then $K_f$ is minimal if and only if $dim \,\ K_f = 1$.
\end{prop}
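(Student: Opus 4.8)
The plan is to prove the contrapositive direction that matters: if $K_f$ is minimal, then $\dim K_f = 1$; the reverse implication is trivial since a one-dimensional subspace clearly contains no proper nonzero invariant subspace. So assume $f \in (\Theta H^2)^\perp$ is nonzero and that $K_f$ is minimal. The key structural fact I want to exploit is the hypothesis $1 \notin \sigma(\Theta)$. By Proposition \ref{spectrainner}, this means $1$ is not an accumulation point of the zeros of $\Theta$ and $1 \notin \mathrm{supp}(\mu)$, so $\Theta$ extends analytically across a neighborhood of $1$ with $|\Theta| = 1$ there. In particular, the reproducing kernels $\kappa_\lambda^\Theta$ and the functions in $(\Theta H^2)^\perp$ are well-behaved near the fixed point $1$ of $\phi_a$.

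First I would use the regularity of $\Theta$ at $1$ to control the behavior of $f$ (and of the whole orbit $C_{\phi_a}^n f$) at the attracting fixed point $z=1$. The dynamics of $\phi_a(z) = az + 1 - a$ are central: $\phi_a$ fixes $1$, and iterates $\phi_{a^n}(z) \to 1$ for every $z \in \mathbb{D}$ as $n \to \infty$. Thus $(C_{\phi_a}^n f)(z) = f(\phi_{a^n}(z)) \to f(1)$ pointwise (the radial/nontangential limit $f(1)$ exists because $f \in (\Theta H^2)^\perp$ and $\Theta$ is regular at $1$, so functions in the model space inherit analytic continuation across $1$). The heuristic is that the orbit converges to the constant $f(1)$, and I would try to show this convergence happens in $H^2$-norm, or at least strongly enough that the constant function $f(1)\cdot \mathbf{1}$ lies in $K_f$.

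The crux is then a dichotomy. If $f(1) \neq 0$, the limiting constant is a nonzero element of $K_f$; since constants span the one-dimensional space $(e_1 H^2)^\perp$, which is itself $C_{\phi_a}$-invariant (indeed it is a fixed line, as $C_{\phi_a}$ fixes constants), minimality of $K_f$ forces $K_f = \mathbb{C}\cdot\mathbf{1}$, giving $\dim K_f = 1$. If instead $f(1) = 0$, then I expect $f$ to be divisible by the factor $(1-z)$ in a controlled way near $1$ (using the analytic continuation), and I would iterate: the minimal cyclic space $K_f$ must still contain a vector whose value at $1$ is nonzero, or else repeated extraction of the vanishing at the fixed point contradicts minimality by producing a proper invariant subspace. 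Concretely, I would consider the linear functional $\ell(h) = h(1)$ (bounded on $K_f$ by regularity at $1$) and examine its kernel inside $K_f$; if $\ell \equiv 0$ on $K_f$ then every orbit vector vanishes at $1$, but the strong limit argument shows the orbit tends to $f(1)=0$, which I would leverage to produce a proper nonzero closed invariant subspace, contradicting minimality.

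The main obstacle I anticipate is making the convergence $C_{\phi_a}^n f \to f(1)$ rigorous in the $H^2$ topology rather than merely pointwise, since pointwise convergence alone does not place the limit inside the closed span $K_f$. Dominated convergence arguments require a uniform $H^2$ bound, and while $\|C_{\phi_a}^n\|$ is uniformly bounded (the $\phi_{a^n}$ fix $1 \in \mathbb{T}$ so the operator norms are controlled), promoting weak to strong convergence is delicate. The hypothesis $1\notin\sigma(\Theta)$ is exactly what should rescue this: it gives genuine analytic continuation of $f$ past $1$, hence boundedness and even uniform control of $f$ on a neighborhood of $1$ in $\overline{\mathbb{D}}$, which together with the contraction of the orbit into that neighborhood should yield norm convergence to the constant. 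I would spend the bulk of the effort securing this convergence and then the minimality dichotomy follows cleanly.
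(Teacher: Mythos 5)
Your opening moves match the paper's: since $\sigma(\Theta)$ is closed and $1 \notin \sigma(\Theta)$, every $f \in (\Theta H^2)^{\perp}$ continues analytically across an arc about $1$ (the paper gets this from \cite[Lemma 14.27]{fricain1}), and your norm-convergence claim is correct --- $\phi_{a^n}(\overline{\mathbb{D}})$ is a closed disk of diameter $2a^n$ containing $1$, so it eventually lies inside the domain of analyticity of $f$, whence $C_{\phi_a}^n f = f\circ\phi_{a^n} \to f(1)$ uniformly on $\overline{\mathbb{D}}$ and hence in $H^2$. This disposes of the case $f(1)\neq 0$: the constants lie in $K_f$, they form a $C_{\phi_a}$-invariant line, and minimality forces $\dim K_f = 1$. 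The genuine gap is the case $f(1)=0$, which you only gesture at. There the orbit converges in norm to $0$, which contributes nothing to $K_f$, and your fallback --- the functional $\ell(h)=h(1)$ on $K_f$ --- is unjustified: boundary evaluation is not continuous on $H^2$, and you cannot claim $K_f \subseteq (\Theta H^2)^{\perp}$ (where such evaluation would indeed be bounded), because the model space itself is not $C_{\phi_a}$-invariant; by Theorem \ref{mp} of this paper the only invariant model spaces are $(e_nH^2)^{\perp}$, so the orbit of $f$ leaves $(\Theta H^2)^{\perp}$ in general. Regularity at $1$ of each orbit vector does not pass to $H^2$-limits of their linear combinations, so neither the boundedness of $\ell$ on $K_f$ nor the closedness and invariance of its kernel there can be salvaged as stated.

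The missing idea is to renormalize the orbit rather than evaluate it. Write $f(z) = (1-z)^k g(z)$, where $k\geq 1$ is the (finite) order of vanishing of $f$ at $1$ and $g$ is analytic across $1$ with $g(1)\neq 0$. Since $1-\phi_{a^n}(z) = a^n(1-z)$, one has $a^{-nk}\,C_{\phi_a}^n f = (1-z)^k\,(g\circ\phi_{a^n}) \to g(1)(1-z)^k$ in $H^2$, by the same uniform-convergence argument you already made for $f$ itself. Hence the nonzero function $(1-z)^k$ lies in $K_f$; moreover it is an eigenvector of $C_{\phi_a}$, since $C_{\phi_a}(1-z)^k = (a-az)^k = a^k(1-z)^k$, so $\mathbb{C}(1-z)^k$ is a one-dimensional invariant subspace contained in $K_f$, and minimality again gives $\dim K_f = 1$. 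This renormalization argument, which covers both cases uniformly ($k=0$ giving the constants), is precisely the content of \cite[Corollary 4.4]{noor}: the paper's own proof consists of the analytic-continuation step followed by a citation of that corollary, so completing your outline as above would amount to reproving the cited result inline.
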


\begin{proof}
     Since $\sigma(\Theta)$ is a compact subset of $\mathbb{D}$ and $1 \notin \sigma(\Theta)$ we can take an arc $\mathbb{I}$ around $1$ such that $\mathbb{I} \cap \sigma(\Theta) = \emptyset$. It is known that each function $f \in (\Theta H^2)^{\perp}$ has an analytic continuation across $S^1 - \sigma(\Theta)$, in particular, each $f$ has an analytic continuation across  $\mathbb{I}$ and then is analytic at $1$ (see \cite{fricain1}, Lemma 14.27). The result follows from Corollary $4.4$ in \cite{noor}.
\end{proof}

\section{Invariant Beurling subspaces}

If $\Theta$ is a zero free inner function then $\Theta = \lambda S_{\mu}$ for some unimodular constant and $S_{\mu}$ a singular inner function. In this case, by Proposition \ref{spectrainner} we conclude that $\sigma(\Theta) = \mathrm{supp} (\mu) \subseteq S^1$. If $\mathrm{supp}(\mu)$ has only one point $\xi_0 \in S^1$ then
$$S_{\mu}(z) = e^ {- K \left(\frac{\xi_0 + z}{\xi_0 - z}\right)}$$
where $K = \mu(\{\xi_0\})$. Our goal in this section is to understand when a Beurling subspace is invariant under $C_{\phi_a}$. The following characterization of $C_\phi$-invariant Beurling subspaces induced by Blaschke products was provided by Cowen and Wahl for elliptic non-automorphisms \cite{Cowen-Wahl} and more generally by Bose, Muthukumar and Sarkar \cite{Beurlingtype}.
\begin{thm}\label{Blashke invariance} Let $B$ be a Blaschke product and let $\phi$ be a holomorphic self-map of $\mathbb{D}$. Then the following statements are equivalent:
\begin{enumerate}
    \item $BH^2$ is $C_{\phi}$-invariant.
    \item $mult_{B}(w) \leq mult_{B \circ \phi}(w)$ for every $w \in Z(B)$.
\end{enumerate}
\end{thm}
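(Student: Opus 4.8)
The plan is to route both implications through the intermediate condition that the quotient $g:=\dfrac{B\circ\phi}{B}$ belongs to $H^\infty$, mirroring the quotient criterion $\frac{\Theta\circ\sigma}{\Theta}\in H^\infty$ already exploited in Section 3. Concretely, I would first prove the equivalence \emph{$BH^2$ is $C_\phi$-invariant $\iff g\in H^\infty$}, and then translate the analytic statement ``$g\in H^\infty$'' into the arithmetic statement (2) about zero multiplicities.

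For the first equivalence, the forward direction uses that $B=B\cdot 1\in BH^2$, so invariance forces $C_\phi B=B\circ\phi\in BH^2$, i.e.\ $B\circ\phi=Bg$ with $g\in H^2$. Since $B$ is inner, $|B^*|=1$ a.e., while $\|B\circ\phi\|_\infty\le 1$ gives $|(B\circ\phi)^*|\le1$ a.e.; hence $|g^*|\le1$ a.e., and the same corollary invoked in Proposition \ref{m2} (\cite{hhspaces}, Cor.\ 1.1.24) upgrades $g\in H^2$ to $g\in H^\infty$. For the converse, if $g\in H^\infty$ then for every $f\in H^2$ one has $C_\phi(Bf)=(B\circ\phi)(f\circ\phi)=B\,g\,(f\circ\phi)$, and since $f\circ\phi\in H^2$ (boundedness of $C_\phi$) and $g\in H^\infty$, the product $g\,(f\circ\phi)$ lies in $H^2$, so $C_\phi(Bf)\in BH^2$.

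For the second step, if $g\in H^\infty$ then $g$ is in particular analytic on $\mathbb{D}$, so $B\circ\phi=Bg$ must vanish at each zero $w$ of $B$ to order at least $mult_B(w)$; this is exactly $mult_B(w)\le mult_{B\circ\phi}(w)$. Conversely, assuming (2), I would invoke the canonical factorization $B\circ\phi=B_{1}S_{1}O_{1}$ into Blaschke, singular, and outer factors. Condition (2) says precisely that the zero set of $B$, counted with multiplicity, is contained in that of $B\circ\phi$, hence in that of $B_1$, so $B_1/B$ is again a Blaschke product (inner). Therefore $g=\big(B_1/B\big)S_1O_1$ is a product of two inner functions and the outer factor of an $H^\infty$ function; since the outer factor of a bounded function is bounded (its boundary modulus equals $|(B\circ\phi)^*|\le1$ a.e.), we get $g\in H^\infty$, closing the loop.

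The main obstacle is the converse half of the second step: from the purely \emph{local} data in (2) one must produce a genuinely \emph{bounded} quotient, not merely an analytic or Nevanlinna-class one, and this is delicate exactly when $B$ has infinitely many zeros accumulating on $\mathbb{T}$ (the situation of interest in Section 4). Cancelling zeros one at a time only guarantees analyticity of $g$; the boundedness is recovered through the factorization argument above, whose crux is that dividing out an inner Blaschke divisor preserves membership in $H^\infty$ and that the outer part of a bounded function stays bounded. I would therefore phrase the divisibility step for the whole (possibly infinite) Blaschke product $B$ at once, rather than arguing zero-by-zero.
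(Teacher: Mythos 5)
Your proposal is correct, and there is nothing to compare it against inside the paper itself: Theorem \ref{Blashke invariance} is stated without proof, being quoted from Cowen--Wahl \cite{Cowen-Wahl} and Bose--Muthukumar--Sarkar \cite{Beurlingtype}. Your route is essentially the one underlying those sources and the rest of the paper: your first equivalence (invariance of $BH^2$ $\iff$ $\frac{B\circ\phi}{B}\in H^\infty$) is precisely \cite[Theorem 2.3]{Beurlingtype}, the criterion the paper invokes repeatedly, and your canonical-factorization argument (multiset containment of zeros, divisibility of Blaschke products, boundedness of the outer factor) correctly upgrades the local multiplicity condition (2) to boundedness of the quotient, closing the equivalence.
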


If   \[
B(z) =  z^N \prod\limits_{i = 1}^{\infty} \frac{\overline{z_i}}{|z_i|} \left( \frac{z_i - z}{ 1 - \overline{z_i}z } \right) \,\ \,\ 
\]
is any Blaschke product, then
\[
B \circ \phi_a(z) =  (az + 1 - a)^N \prod\limits_{i = 1}^{\infty} \frac{\overline{z_i}}{|z_i|} \left( \frac{z_i - az - 1 + a}{ 1 - \overline{z_i}(az + 1 - a) } \right) .
\]

Consider the example of a Blaschke product formed by the zeros $(1 - a^{2n})_{n \in \mathbb{N}}$. Every $w \in Z(B)$ has multiplicity equal to $1$ and $B \circ \phi_{a^2}(1 - a^{2n}) = B(1 - a^{4n}) = 0$. This means that the zeros of $B$ are all zeros of $B \circ \phi_{a^2}$ which implies that $BH^2$ is $C_{\phi_{a^2}}$-invariant by Theorem \ref{Blashke invariance}. On the other hand $B \circ \phi_a(1 - a^2) = B(1 - a^3) \neq 0$ and thus using Theorem \ref{Blashke invariance} again we conclude that $B H^2$ is not $C_{\phi_a}$-invariant. In particular, this example demonstrates that not all invariant Beurling subspaces for some $C_{\phi_a}$ are necesarily invariant under the entire family $\{C_{\phi_a}:a\in(0,1)\}$. We state this for use later.



   

\begin{exem}\label{common}
    The Blaschke product $B$ formed by the sequence of zeros $(1 - a^{2n})_{n \in \mathbb{N}}$ all with multiplicity $1$ is such that $BH^2$ is $C_{\phi_{a^2}}$-invariant, but is not $C_{\phi_{a}}$-invariant.
\end{exem}

In general, if $z_0 \in \mathbb{D}$ then the Blaschke product $B$ formed by the simple zeros $z_0, az_0 + 1 - a, a^2z_0 + 1 - a^2 \ldots$ is such that $BH^2$ is $C_{\phi_a}$-invariant.  This lead us to the following conclusion:
\begin{cor}\label{bminimal}
    Let $\Theta$ be an inner function such that $\Theta H^2$ is $C_{\phi_a}$-invariant. Then there exists an inner function $\Upsilon $ with $\Upsilon H^2 \subsetneq \Theta H^2$ and $\Upsilon H^2\in\mathrm{Lat}(C_{\phi_a})$. In particular, no minimal invariant subspace for $C_{\phi_a}$ can be a Beurling subspace.
\end{cor}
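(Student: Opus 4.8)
The plan is to show that every $C_{\phi_a}$-invariant Beurling subspace $\Theta H^2$ strictly contains a smaller one obtained by \emph{enlarging} the inner symbol: if $\Psi$ is any non-constant inner function with $\Psi H^2$ itself $C_{\phi_a}$-invariant, then $\Upsilon:=\Theta\Psi$ will do the job. The whole argument rests on the fact that the class of inner symbols producing a $C_{\phi_a}$-invariant Beurling subspace is closed under multiplication, which in turn follows from a bounded-multiplier description of invariance refining Theorem~\ref{Blashke invariance} to arbitrary inner functions (this is needed because, by the dichotomy, $\Theta$ may be purely singular and carry no zeros on which to test the multiplicity condition).

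First I would extract bounded multipliers from invariance. If $\Theta H^2$ is $C_{\phi_a}$-invariant, then $C_{\phi_a}\Theta=\Theta\circ\phi_a\in\Theta H^2$, so $\Theta\circ\phi_a=\Theta g_\Theta$ for some $g_\Theta\in H^2$. Since $\phi_a$ and $\Theta$ are self-maps of $\mathbb{D}$, the composition $\Theta\circ\phi_a$ has modulus at most $1$, and because $|\Theta^*|=1$ a.e.\ on $\mathbb{T}$ we get $|g_\Theta^*|=|(\Theta\circ\phi_a)^*|\le 1$ a.e.; hence $g_\Theta\in H^\infty$ (a bounded $H^2$ function lies in $H^\infty$, as already used in the proof of Proposition~\ref{m2}). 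The same reasoning applied to a non-constant inner $\Psi$ with $\Psi H^2$ invariant yields $\Psi\circ\phi_a=\Psi g_\Psi$ with $g_\Psi\in H^\infty$. For $\Psi$ I would take the orbit Blaschke product with simple zeros $\{\phi_a^n(z_0)\}_{n\ge 0}=\{a^nz_0+1-a^n\}_{n\ge 0}$ for an arbitrary fixed $z_0\in\mathbb{D}$, which is non-constant inner and has $\Psi H^2$ $C_{\phi_a}$-invariant by the discussion preceding the statement.

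Setting $\Upsilon:=\Theta\Psi$, which is again inner, I would compute
$$(\Theta\Psi)\circ\phi_a=(\Theta\circ\phi_a)(\Psi\circ\phi_a)=\Theta\Psi\, g_\Theta g_\Psi,$$
with $g_\Theta g_\Psi\in H^\infty$. Then for every $h\in H^2$,
$$C_{\phi_a}(\Upsilon h)=\big((\Theta\Psi)\circ\phi_a\big)(h\circ\phi_a)=\Upsilon\,\big(g_\Theta g_\Psi\,(h\circ\phi_a)\big)\in\Upsilon H^2,$$
so $\Upsilon H^2\in\mathrm{Lat}(C_{\phi_a})$. The inclusion $\Upsilon H^2=\Theta\Psi H^2\subseteq\Theta H^2$ is strict: since the inner generator of a Beurling subspace is unique up to a unimodular constant, $\Theta\Psi H^2=\Theta H^2$ would force $\Psi$ to be constant. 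Thus $\Upsilon H^2$ is a proper, nonzero, $C_{\phi_a}$-invariant subspace of $\Theta H^2$, so $\Theta H^2$ is not minimal, which is exactly the final assertion.

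I expect the only substantive point to be the passage from bare invariance to the bounded multiplier $g_\Theta\in H^\infty$: it uses that $\phi_a$ is a self-map of $\mathbb{D}$ (keeping $\Theta\circ\phi_a$ in the unit ball) together with $|\Theta^*|=1$ a.e., and everything else is bookkeeping. A minor verification I would include is that $\{a^nz_0+1-a^n\}_{n\ge 0}$ really is a Blaschke sequence accumulating only at $1$, which is immediate from $1-|\phi_a^n(z_0)|=O(a^n)$.
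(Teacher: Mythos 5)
Your proposal is correct and takes essentially the same route as the paper: both choose $\Upsilon=\Theta B$ where $B$ is the orbit Blaschke product with simple zeros $\{a^n z_0+1-a^n\}_{n\ge 0}$ (invariant by the discussion preceding the statement), and both verify $C_{\phi_a}(\Upsilon h)\in\Upsilon H^2$ by factoring through a bounded multiplier, concluding strictness and non-minimality identically. The only cosmetic difference is that you re-derive the Schur-class multiplier property $\Theta\circ\phi_a/\Theta\in H^\infty$ from boundary values for \emph{both} factors, whereas the paper cites \cite[Theorem 2.3]{Beurlingtype} for $B$ alone and handles the $\Theta$ factor by applying invariance of $\Theta H^2$ directly to the element $\Theta g$.
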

\begin{proof}
    Let $z_0 \in \mathbb{D}$ and consider the Blaschke product $B$ formed by the sequence $ (a^nz_0 + 1 - a^n)_{n\geq 0}$. By the above discussion $BH^2$ is $C_{\phi_a}$-invariant. If we let $\Upsilon = B\Theta$, then $\Upsilon H^2 \subsetneq  \Theta H^2$. To prove that $\Upsilon H^2$ is invariant under $C_{\phi_a}$, we argue as follows. By \cite[Theorem 2.3]{Beurlingtype} we have  $\frac{B \circ \phi_a}{B} \in \mathcal{S} \subseteq H^{\infty}$. Moreover, if $\Upsilon g = B\Theta g \in \Upsilon H^2$ then
$$\frac{(B \circ \phi_a )}{B} B(\Theta \circ \phi_a) (g \circ \phi_a) = \frac{(B \circ \phi_a )}{B} B \Theta g_2 =   B  \Theta \frac{(B \circ \phi_a )}{B} g_2 = \Upsilon h.$$
    where $h \in H^2$. Thus $C_{\phi_a}(\Upsilon g) = \Upsilon h \in \Upsilon H^2$ and we are done. The last claim follows directly by the definition of minimality.
   \end{proof}
Our main goal in this section is to study the $C_{\phi_a}$-invariant Beurling subspaces. We start with the following lemma whose proof is partly inspired by ideas contained within \cite[Section 7]{Cesaro 2}.

\begin{lem}\label{mainbeurling}
    Suppose $\Theta$ is a nonconstant inner function and $\Theta H^2$ is $C_{\phi_a}$-invariant for some $a\in(0,1)$. Then $1 \in \sigma(\Theta)$. If $1\notin \overline{Z(\Theta)}$, then $\sigma(\Theta) \cap S^1=\{1\}$ and $1 \in supp(\mu)$, where $\mu$ is the singular measure associated with $\Theta$.
       
\end{lem}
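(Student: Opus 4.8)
The plan is to translate the invariance of $\Theta H^2$ into two consequences of a single functional equation and then analyse the boundary behaviour at the attracting fixed point $1$. By \cite[Theorem 2.3]{Beurlingtype}, $\Theta H^2$ being $C_{\phi_a}$-invariant is equivalent to $g:=(\Theta\circ\phi_a)/\Theta\in\mathcal{S}$. I would set $u:=-\log|\Theta|\ge 0$, which is superharmonic on $\mathbb{D}$, continuous and finite off $Z(\Theta)$ and equal to $+\infty$ on $Z(\Theta)$. From $g\in\mathcal{S}$ I extract the two tools used throughout: first $|\Theta\circ\phi_a|\le|\Theta|$, i.e.
\[
u(\phi_a(z))\ge u(z)\qquad(z\in\mathbb{D});
\]
and second, since $g\in H^\infty$ is analytic on all of $\mathbb{D}$, the identity $\Theta(\phi_a(w))=\Theta(w)g(w)$ shows that $\phi_a(Z(\Theta))\subseteq Z(\Theta)$, so the zero set is forward invariant under $\phi_a$.

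To prove $1\in\sigma(\Theta)$ I argue by contradiction. If $1\notin\sigma(\Theta)$ then $1$ is a regular point, so $\Theta$ continues analytically and zero-freely across a neighbourhood $V$ of $1$ with $|\Theta|=1$ on $V\cap\mathbb{T}$; hence $u=-\mathrm{Re}\log\Theta$ is real-analytic and harmonic on $V$, vanishes on $V\cap\mathbb{T}$, and is strictly positive inside because $\Theta$ is nonconstant. Restricting to the real segment and using $1-\phi_a(x)=a(1-x)$, the boundary Taylor expansion gives $u(x)=c(1-x)+O((1-x)^2)$ with $c=\lim_{x\to1^-}u(x)/(1-x)\ge0$; were $c=0$, the Cauchy data of $u$ would vanish on the arc and force $u\equiv0$, hence $\Theta$ constant, so in fact $c>0$ (this is the positivity of the angular derivative, via Hopf's lemma). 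Then $u(\phi_a(x))=ca(1-x)+O((1-x)^2)$, and dividing $u(\phi_a(x))\ge u(x)$ by $(1-x)>0$ and letting $x\to1^-$ yields $ca\ge c$, i.e. $a\ge1$, contradicting $a\in(0,1)$.

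For the second statement I assume $1\notin\overline{Z(\Theta)}$. Since $\sigma(\Theta)=\overline{Z(\Theta)}\cup\mathrm{supp}(\mu)$ by Proposition \ref{spectrainner} and $1\in\sigma(\Theta)$ by the first part, while $1\notin\overline{Z(\Theta)}$, I immediately obtain $1\in\mathrm{supp}(\mu)$. It remains to show $\sigma(\Theta)\cap S^1=\{1\}$. Suppose, for contradiction, that some $\zeta_0\in\sigma(\Theta)\cap S^1$ satisfies $\zeta_0\neq1$. Then $\limsup_{w\to\zeta_0}u(w)=+\infty$, so there are $w_n\to\zeta_0$ in $\mathbb{D}$ with $u(w_n)\to\infty$. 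Because $\zeta_0\neq1$, the point $p:=\phi_a(\zeta_0)$ lies in $\mathbb{D}$ and $\phi_a(w_n)\to p$; the inequality forces $u(\phi_a(w_n))\ge u(w_n)\to\infty$, and as $u$ is continuous and finite at every interior point off $Z(\Theta)$, this is possible only if $p\in Z(\Theta)$. By forward invariance the entire orbit $\{\phi_a^n(p):n\ge0\}$ consists of zeros of $\Theta$; these points are distinct because each iterate $\phi_a^n=\phi_{a^n}$ has $1$ as its only fixed point, and they converge to $1$. Hence $1\in\overline{Z(\Theta)}$, contradicting the hypothesis, and therefore $\sigma(\Theta)\cap S^1=\{1\}$.

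The step I expect to be the main obstacle is the first one: reducing $1\in\sigma(\Theta)$ to a clean boundary computation requires the regularity built into the notion of a regular point and, crucially, the strict positivity $c>0$ of the angular derivative, without which the ratio argument is vacuous. Once that is secured, the second statement is comparatively soft; its one genuinely useful observation is that a boundary spectral point other than $1$ forces an interior zero, whose whole forward $\phi_a$-orbit then lies in $Z(\Theta)$ and accumulates at $1$, directly contradicting $1\notin\overline{Z(\Theta)}$.
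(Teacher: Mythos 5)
Your proof is correct, but both halves take a genuinely different route from the paper's, and the comparison is instructive. For $1\in\sigma(\Theta)$, the paper stays entirely with the discrete orbit $x_n=1-a^n=\phi_{a^n}(0)$: the inequality $|\Theta\circ\phi_a|\le|\Theta|$ coming from \cite[Theorem 2.3]{Beurlingtype} makes $|\Theta(x_n)|$ non-increasing in $n$, while the assumed analytic continuation across an arc at $1$ forces $|\Theta(x_n)|\to 1$; hence $|\Theta(x_{n_0})|=1$ at an interior point, contradicting the maximum modulus principle for a nonconstant inner function. You instead compare first-order decay rates of $u=-\log|\Theta|$ at the fixed point, getting $ca\ge c$ and needing the strict positivity $c>0$. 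That works, but it imports Hopf's lemma where the paper needs only a monotone sequence plus continuity; note also that your fallback justification for $c>0$ ("if $c=0$ the Cauchy data of $u$ would vanish on the arc") is not right as stated, since $u\equiv 0$ on the arc together with $c=0$ only kills the gradient of $u$ at the single point $1$, not the Cauchy data along the arc -- it is Hopf's lemma (equivalently, positivity of the angular derivative) that genuinely carries this step, and it does. For the second half, the paper argues differently: it fixes $m_0$ with $\Theta$ zero-free on $\phi_{a^{m_0}}(\mathbb{D})$, uses invariance under the iterate $C_{\phi_{a^{m_0+1}}}$ to write $\Theta\circ\phi_{a^{m_0+1}}=\Theta g$ with $g\in\mathcal{S}$, and then shows $\liminf_{w\to\xi}|g(w)|=\infty$ at a putative spectral point $\xi\neq 1$, contradicting boundedness of $g$. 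Your version lands the contradiction elsewhere: the degeneration $|\Theta(w_n)|\to 0$ pushed forward by $\phi_a$ produces an interior zero at $p=\phi_a(\zeta_0)\in\mathbb{D}$, and forward invariance of $Z(\Theta)$ then yields zeros $\phi_{a^n}(p)=1-a^n(1-p)\to 1$, contradicting $1\notin\overline{Z(\Theta)}$ directly. This is arguably more economical -- no higher iterate and no zero-free disk are needed, only the two consequences of the functional equation you isolated at the outset -- and it has the side benefit of exhibiting explicitly the mechanism (orbits of zeros accumulating at $1$) that the paper's Theorem \ref{dic} later exploits. Both proofs rest on the same inputs, namely \cite[Theorem 2.3]{Beurlingtype} and Proposition \ref{spectrainner}; the difference is only where the contradiction is realized.
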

\begin{proof} Suppose that $1 \notin \sigma(\Theta)$. By (\cite{Beurlingtype}, Theorem $2.3$) $\Theta H^2$ is $C_{\phi_a}$-invariant if and only if
\begin{equation}\label{star}
        \frac{\Theta \circ \phi_a}{\Theta} \in \mathcal{S}(\mathbb{D})
          \end{equation}
where $\mathcal{S}(\mathbb{D}) = \{f \in H^{\infty} \,\ | \,\ \|f\|_{\infty} \leq 1\}.$  Since $1 \notin \sigma(\Theta)$ we know that $\Theta$ has an analytic continuation across some open arc $I$ in $S^1$, $1 \in I$ and $|\Theta(w)| = 1$ for all $w \in I$. Now, consider the sequence $(1 - a^n)_{n \in \mathbb{N}} = \{\phi_{a^n}(0)\}_{n \in \mathbb{N}}$. Note that there exists $n_0 \in \mathbb{N}$ such that for every $n \geq n_0$, we have $|\Theta(1 - a^n)| \geq \frac{1}{2}$, otherwise we conclude that for some subsequence $(1 - a^{n_k})$ we have $\frac{1}{2} \geq |\Theta(1 - a^{n_k})| \to |\Theta(1)| = 1$ by continuity and this is a contradiction. So, for $n \geq n_0$ the condition \eqref{star} implies
\[
\ldots \leq  |\Theta (1 - a^{n_{0} + 2})| \leq |\Theta (1 - a^{n_{0} + 1})| \leq |\Theta(1 - a^{n_0})| \leq 1 
\]
when we evaluate $\frac{\Theta \circ \phi_a}{\Theta}$ at the points $(1 - a^{n})_{n \geq n_0}$. Thus
\begin{equation}\label{diamond}
    |\Theta (1 - a^{n_{0}+ j })| \leq |\Theta(1 - a^{n_0})| \leq 1. 
\end{equation} 
Since $(1 - a^{n_0 + j}) \to 1$, letting $j \to \infty$ in \eqref{diamond} gives $|\Theta(1 - a^{n_0})| = 1$. This implies $\Theta$ must be constant since $\Theta(\mathbb{D})\subset\mathbb{D}$ for non-constant inner functions. This contradiction  proves that $1 \in \sigma(\Theta)$. To prove the second part, we start by writing $\sigma(\Theta) = \overline{\{z_1, z_2, \ldots\,\}} \cup \mathrm{supp}(\mu)$ where $(z_n)_{n \in \mathbb{N}}$ are the zeros of $\Theta$. Let $\xi \in S^1 - \{1\}$ such that $\xi \in \sigma(\Theta)$. By Proposition \ref{spectrainner} we obtain
\begin{align*}
   \liminf\limits_{w \to \xi, \,\  w \in \mathbb{D}} |\Theta(w)| = 0 .
\end{align*}
 As $1\notin \overline{Z(\Theta)}$, we can choose $m_0 \in \mathbb{N}$ such that $\Theta$ is zero free in $\phi_{a^{n}}(\mathbb{D})$ for all $n \geq m_0$. Since $\Theta H^2$ is $C_{\phi_a}$-invariant, it is also $C_{\phi_{a^{m_0+ 1}}}$-invariant.  By \cite[Theorem 2.3]{Beurlingtype} we can write $\Theta \circ \phi_{a^{m_0 + 1}} = \Theta g$ for some $g \in \mathcal{S}(\mathbb{D})$. Note that $a^{m_0 + 1}\xi + 1 - a^{m_0 + 1} \in \phi_{a^{m_0}}(\mathbb{D})$ (because $\xi \neq 1$) and 
 \begin{align*}
   \liminf\limits_{w \to \xi, \,\  w \in \mathbb{D}} |\Theta \circ \phi_a^{m_0 + 1}(w)| & =  \liminf\limits_{w \to \xi, \,\  w \in \mathbb{D}} |\Theta(a^{m_0 + 1}w + 1 - a^{m_0 + 1})| \\
   & = |\Theta(a^{m_0 + 1}\xi + 1 - a^{m_0 + 1})| \neq 0
\end{align*}
because $\Theta$ is zero-free in  $\phi_{a^{m_0}}(\mathbb{D})$. So
\begin{align*}
   \liminf\limits_{w \to \xi, \,\  w \in \mathbb{D}} |g(w)| = \infty
\end{align*}
 which implies that $g$ is unbounded and we arrive at a contradiction. So the only possible point in $\sigma(\Theta) \cap S^{1}$ is $1$. For the last claim, since $1 \notin  \overline{ \{z_1, z_2, \ldots \} }$ the only possibility is $1 \in \mathrm{supp}(\mu)$.
\end{proof}

It is known that the space $e^{-K ( \frac{1 + z}{1 -z} )}H^2$ (where $K > 0$ is a constant) is  $C_{\phi_a}$-invariant (see \cite[Theorem 6]{Cowen-Wahl}). As a consequence of this and Proposition \ref{mainbeurling}, we obtain the main result of this section.

\begin{thm}\label{dic}
    Let $a \in (0,1)$ and $\Theta \in H^2$ inner. If $\Theta H^2$ is $C_{\phi_a}$-invariant, then exactly one of the following occurs:
    \begin{itemize}
        \item $\Theta(z) = \lambda e^{-K ( \frac{1 + z}{1 -z} )}$ for some $K > 0$ and $|\lambda| = 1$, or
        \item $\Theta$ has infinitely many zeros accumulating at $1$.
    \end{itemize}
\end{thm}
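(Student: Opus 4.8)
The plan is to dichotomize according to whether $1$ is an accumulation point of the zeros of $\Theta$, that is, whether $1 \in \overline{Z(\Theta)}$, and to check that these two cases match the two alternatives exactly. I will assume $\Theta$ is nonconstant (for a unimodular constant $\Theta$ one has $\Theta H^2 = H^2$, which the dichotomy implicitly excludes) and work with the canonical factorization $\Theta = \lambda B S_\mu$, so that $Z(\Theta) = Z(B)$ and, by Proposition \ref{spectrainner}, $\sigma(\Theta) = \overline{Z(\Theta)} \cup \mathrm{supp}(\mu)$.

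If $1 \in \overline{Z(\Theta)}$, I would argue that the second alternative holds almost immediately: since $Z(\Theta) \subseteq \mathbb{D}$ and $1 \in S^1$, the point $1$ can belong to $\overline{Z(\Theta)}$ only as a limit of zeros, and a finite zero set is closed in $\overline{\mathbb{D}}$ with no boundary points. Hence $\Theta$ must have infinitely many zeros accumulating at $1$, using nothing beyond Proposition \ref{spectrainner}.

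The real content is the case $1 \notin \overline{Z(\Theta)}$, which I expect to require Lemma \ref{mainbeurling}: it yields $\sigma(\Theta) \cap S^1 = \{1\}$ and $1 \in \mathrm{supp}(\mu)$. For the singular part I would combine $\mathrm{supp}(\mu) \subseteq \sigma(\Theta) \cap S^1 = \{1\}$ with $1 \in \mathrm{supp}(\mu)$ to force $\mathrm{supp}(\mu) = \{1\}$; a singular measure with one-point support is a point mass $K\delta_1$, with $K = \mu(S^1) > 0$ since $\mu \neq 0$, producing $S_\mu(z) = e^{-K(\frac{1+z}{1-z})}$. For the Blaschke part, $\overline{Z(\Theta)} \cap S^1 \subseteq \sigma(\Theta) \cap S^1 = \{1\}$ together with $1 \notin \overline{Z(\Theta)}$ gives $\overline{Z(\Theta)} \cap S^1 = \emptyset$, so the zeros do not accumulate on the circle and $B$ is a finite Blaschke product. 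To eliminate the remaining finitely many zeros I would pass from invariance of $\Theta H^2$, i.e. $\frac{\Theta\circ\phi_a}{\Theta} \in \mathcal{S}(\mathbb{D})$ by \cite[Theorem 2.3]{Beurlingtype}, to a statement about $B$ alone: the quotient is analytic on $\mathbb{D}$, so at each $w \in Z(B)$ the denominator $\Theta$ vanishes and hence the numerator must too, giving $\Theta(\phi_a(w)) = 0$; since $S_\mu$ is zero-free on $\mathbb{D}$ this forces $B(\phi_a(w)) = 0$, whence $\phi_a(Z(B)) \subseteq Z(B)$. Then for any $w_0 \in Z(B)$ the forward orbit $\phi_a^n(w_0) = 1 - a^n(1 - w_0)$ stays in $Z(B)$ and consists of infinitely many distinct points (because $w_0 \neq 1$ and the $a^n$ are distinct), contradicting finiteness unless $Z(B) = \emptyset$. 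Thus $B$ is constant and $\Theta = \lambda e^{-K(\frac{1+z}{1-z})}$, the first alternative; the known $C_{\phi_a}$-invariance of $e^{-K(\frac{1+z}{1-z})}H^2$ shows this case really occurs.

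To finish, I would note that the two alternatives are mutually exclusive (the first is zero-free, the second is not), so exactly one of them holds. The step I expect to be the main obstacle is the Blaschke part of the second case: cleanly transferring the divisibility condition from $\Theta$ to its Blaschke factor $B$ — justifying that the singular factor neither cancels nor creates zeros in $\mathbb{D}$ — and then converting $\phi_a(Z(B)) \subseteq Z(B)$ into a contradiction via the infinite, pairwise distinct forward orbit converging to the fixed point $1$. The remaining steps are essentially bookkeeping with Proposition \ref{spectrainner} and Lemma \ref{mainbeurling}.
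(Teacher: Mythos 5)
Your proof is correct, but it uses a different decomposition than the paper. The paper splits on whether $Z(\Theta)$ is empty: if $\Theta$ is zero-free, Lemma \ref{mainbeurling} forces $\sigma(\Theta)=\mathrm{supp}(\mu)=\{1\}$, giving the first alternative at once; if $\Theta$ has any zero $z_0$, the divisibility condition $\frac{\Theta\circ\phi_{a^n}}{\Theta}\in\mathcal{S}(\mathbb{D})$ (valid for every $n$ since $C_{\phi_{a^n}}=C_{\phi_a}^n$ preserves $\Theta H^2$) places the whole orbit $a^nz_0+1-a^n$ inside $Z(\Theta)$, which \emph{is} the second alternative. You instead split on whether $1\in\overline{Z(\Theta)}$: the accumulation case becomes vacuous, but the price is that when $1\notin\overline{Z(\Theta)}$ you must also dispose of a possible nontrivial Blaschke factor, which you do by first showing $B$ is finite (via $\sigma(\Theta)\cap S^1=\{1\}$ and compactness) and then running the same orbit argument in contrapositive form: a nonempty finite zero set cannot satisfy $\phi_a(Z(B))\subseteq Z(B)$ because the forward orbit $1-a^n(1-w_0)$ consists of infinitely many distinct points. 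So the two proofs rest on exactly the same ingredients --- Lemma \ref{mainbeurling} together with the zero-propagation consequence of \cite[Theorem 2.3]{Beurlingtype} --- and differ only in how the cases are cut. The paper's cut is the more economical one, since the orbit argument there directly produces the accumulating zeros rather than a contradiction, and no separate finiteness step for $B$ is needed; your cut has the minor virtues of making the mutual exclusivity of the two alternatives explicit and of isolating the observation that an invariant Beurling subspace whose inner function has finitely many zeros must in fact be zero-free.
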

\begin{proof}
    If $\Theta$ is zero free then $\sigma(\Theta) = \mathrm{supp}(\mu) = \{1\}$ by Lemma \ref{mainbeurling}, where $\mu$ is the measure associated to the singular inner part of $\Theta$. Thus $\Theta$ is as desired. If $\Theta$ has a zero $z_0$, as $\Theta H^2$ is $C_{\phi_a}$-invariant we have
     $$\frac{\Theta \circ \phi_a}{\Theta} \in \mathcal{S}(\mathbb{D}) \,\ \,\  
     $$
     (\cite{Beurlingtype}, Theorem $2.3$). So $\Theta(az_0 + 1 -a) = 0$ otherwise this quotient is not analytic. But $\Theta H^2$ is also $C_{\phi_{a^2}}$-invariant because $C_{\phi_{a^2}}(\Theta H^2) = C_{\phi_a}(C_{\phi_a}(\Theta H^2))\subseteq \Theta H^2$. Using the same argument again we conclude that
     $$\frac{\Theta \circ \phi_{a^2}}{\Theta} \in \mathcal{S}(\mathbb{D}) \,\ \,\  
     $$
     and thus $\Theta(a^2z_0 + 1 - a^2) = 0$. Repeating this argument for each $n \in \mathbb{N}$ we obtain $\Theta(a^nz_0 + 1 - a^n) = 0$ which implies the desired result.
\end{proof}

 Due to Example \ref{common} there exist Beurling spaces that are invariant under $C_{\phi_{a}}$ for some, but not all $a \in (0,1)$. Hence the second case above does occur. As a consequence we obtain a recent result of Gallardo-Gutiérrez, Partington and Ross \cite[Theorem 7.7]{Cesaro 2}.

  \begin{cor}\label{Cesaro Model}
      A model space $(\Theta H^2)^\perp$ is invariant under the Cesàro operator $\mathcal{C}$ if and only if $\Theta(z) = \lambda e^{-K ( \frac{1 + z}{1 -z} )}$ for some $K > 0$ and $|\lambda| = 1$.
  \end{cor}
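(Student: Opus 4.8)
The plan is to reduce the statement to the Beurling-side problem already settled in this section via the duality of Theorem \ref{GP1}. That theorem says $(\Theta H^2)^{\perp}$ is $\mathcal{C}$-invariant if and only if its orthogonal complement $\Theta H^2$ is invariant under the entire semigroup $\Phi=(\varphi_t)_{t\geq 0}$. Since $\varphi_t(z)=e^{-t}z+1-e^{-t}=\phi_{e^{-t}}(z)$ and $a=e^{-t}$ sweeps out all of $(0,1)$ as $t$ runs over $(0,\infty)$ (with $t=0$ giving the identity), this is exactly the requirement that $\Theta H^2$ be $C_{\phi_a}$-invariant \emph{simultaneously for every} $a\in(0,1)$. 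Thus the whole corollary amounts to identifying those non-constant inner $\Theta$ (the trivial model space $\{0\}$ being excluded) with $\Theta H^2\in\mathrm{Lat}(C_{\phi_a})$ for all $a$, and the machinery of this section is tailored to answer precisely that.

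For \textbf{sufficiency} I would take $\Theta(z)=\lambda e^{-K(\frac{1+z}{1-z})}$ and check the criterion $\Theta\circ\phi_a/\Theta\in\mathcal{S}(\mathbb{D})$ of \cite[Theorem 2.3]{Beurlingtype} for every $a$ directly. Using $1-\phi_a(z)=a(1-z)$, a short computation yields
$$\frac{\Theta\circ\phi_a(z)}{\Theta(z)}=\exp\left(-K\,\frac{2(1-a)}{a}\cdot\frac{1}{1-z}\right),$$
and since $\mathrm{Re}\,\frac{1}{1-z}>0$ on $\mathbb{D}$ while $K\,\frac{2(1-a)}{a}>0$, this quotient is bounded analytic of modulus at most $1$ for each $a\in(0,1)$. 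Hence $\Theta H^2$ is $\Phi$-invariant and $(\Theta H^2)^{\perp}$ is $\mathcal{C}$-invariant; this recovers the known fact recorded just before Theorem \ref{dic}.

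For \textbf{necessity}, suppose $\Theta H^2$ is $C_{\phi_a}$-invariant for all $a\in(0,1)$. The decisive step — and the one where invoking the whole family rather than a single symbol is essential — is to rule out zeros. If $\Theta(z_0)=0$ for some $z_0\in\mathbb{D}$, then for each fixed $a$ the analyticity of $\Theta\circ\phi_a/\Theta\in\mathcal{S}(\mathbb{D})$ forces $\Theta(\phi_a(z_0))=\Theta(az_0+1-a)=0$. Letting $a$ vary, the points $az_0+(1-a)$ fill the open segment joining $z_0$ to $1$, which lies in $\mathbb{D}$ and is uncountable, so the identity theorem gives $\Theta\equiv 0$, a contradiction. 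Therefore $\Theta=\lambda S_\mu$ is zero-free, and Lemma \ref{mainbeurling} applies with $\overline{Z(\Theta)}=\emptyset$, giving $\sigma(\Theta)\cap S^1=\{1\}$; combined with $\sigma(\Theta)=\mathrm{supp}(\mu)\subseteq S^1$ from Proposition \ref{spectrainner}, this forces $\mathrm{supp}(\mu)=\{1\}$, i.e.\ $\mu=K\delta_{1}$ and $\Theta(z)=\lambda e^{-K(\frac{1+z}{1-z})}$. The main obstacle is exactly this zero-exclusion: for a single $a$, Theorem \ref{dic} only produces a \emph{discrete} sequence of zeros accumulating at $1$ (realized by genuine Blaschke examples such as Example \ref{common}), so the dichotomy cannot be collapsed to the purely singular case without exploiting the full continuum of symbols $\{\phi_a:a\in(0,1)\}$ supplied by the Cesàro flow.
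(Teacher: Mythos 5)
Your proposal is correct and follows essentially the same route as the paper: reduce via Theorem \ref{GP1} to simultaneous $C_{\phi_a}$-invariance for all $a\in(0,1)$, rule out zeros because $\Theta(az_0+1-a)=0$ for all $a$ would force $\Theta\equiv 0$, and then conclude $\Theta$ is the atomic singular inner function via the zero-free case of Lemma \ref{mainbeurling}. The only difference is cosmetic: where the paper cites the known fact (Cowen--Wahl) that $e^{-K(\frac{1+z}{1-z})}H^2$ is $C_{\phi_a}$-invariant for sufficiency, you verify it by the direct computation $\Theta\circ\phi_a/\Theta=\exp\bigl(-K\tfrac{2(1-a)}{a}\cdot\tfrac{1}{1-z}\bigr)\in\mathcal{S}(\mathbb{D})$, which makes the argument self-contained.
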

\begin{proof} A model space $(\Theta H^2)^\perp$ is $\mathcal{C}$-invariant if and only if $\Theta H^2$ is $C_{\phi_a}$-invariant for all $a\in(0,1)$ by Theorem \ref{GP1}. By the proof of Theorem \ref{dic}, if $\Theta$ vanishes at some $z_0\in\mathbb{D}$ then $\Theta(az_0 + 1 - a) = 0$ for all $a \in (0,1)$ which is impossible unless $\Theta\equiv 0$. Hence $\Theta$ must be an atomic singular inner function.
\end{proof}

\section*{Acknowledgements}
This work constitutes a part of the doctoral thesis of the first author, partially supported by the Conselho Nacional de Desenvolvimento Cient\'{i}fico e Tecnol\'{o}gico - CNPq Brasil, under the supervision of the second named author.

\bibliographystyle{amsplain}

\end{document}